\documentclass[11pt]{article}

\usepackage[margin=1in]{geometry}
\usepackage{amsmath,amssymb,amsthm,mathtools}
\usepackage[colorlinks=true,citecolor=blue,linkcolor=blue,urlcolor=blue]{hyperref}
\hypersetup{
  pdftitle={Strong Convergence of I.I.D. Non-Hermitian Random Matrices under a Fourth-Moment Hypothesis},
  pdfauthor={Yanjin Xiang and Zhihua Zhang},
  pdfkeywords={strong convergence, non-Hermitian random matrices, circular elements, finite fourth moment}
}

\newtheorem{theorem}{Theorem}[section]
\newtheorem{proposition}[theorem]{Proposition}
\newtheorem{lemma}[theorem]{Lemma}
\newtheorem{corollary}[theorem]{Corollary}
\theoremstyle{definition}
\newtheorem{definition}[theorem]{Definition}
\theoremstyle{remark}
\newtheorem{remark}[theorem]{Remark}

\DeclareMathOperator{\Tr}{Tr}
\DeclareMathOperator{\E}{\mathbb E}
\DeclareMathOperator{\spn}{sp}
\newcommand{\trn}{\operatorname{tr}}
\newcommand{\one}{\mathbf 1}

\title{Strong Convergence for a General Class of Random Matrix Models}
\author{%
  Yanjin Xiang and Zhihua Zhang\\
  School of Mathematical Sciences, Peking University\\
  {\small\texttt{yjxiang@stu.pku.edu.cn; zhzhang@math.pku.edu.cn}}
}
\date{\today}

\begin{document}

\maketitle

\begin{abstract}
Let \(X_{1,n},\ldots,X_{d,n}\) be \(n\times n\) random matrices built from
independent i.i.d. entry arrays, with centered entries,
normalized by \(n^{-1/2}\).  We prove that, if every entry law has finite
fourth moment, then this tuple converges almost surely strongly in
\(*\)-distribution to a free circular family with the matching variances.
Equivalently, normalized traces and operator norms converge for every fixed
noncommutative \(*\)-polynomial, including polynomials with fixed matrix
coefficients.  No assumption is imposed on the
pseudo-variances of the complex entries.  The bounded-entry argument applies
the spectrum and moment universality estimates of Brailovskaya and van Handel
to all self-adjoint linear pencils.  The matching Gaussian pencils are reduced
to independent Wigner matrices and identified by Anderson's strong convergence
theorem.  A fixed-level centered truncation, followed by the Bai--Yin norm
bound, transfers the result to finite fourth moments.
\end{abstract}

\medskip
\noindent\textbf{2020 Mathematics Subject Classification.}
60B20, 46L54.

\smallskip
\noindent\textbf{Keywords.}
Strong convergence, non-Hermitian random matrices, circular elements,
matrix universality, finite fourth moment.

\section{Introduction}

Let \(X_{1,n},\ldots,X_{d,n}\) be independent non-Hermitian random matrices
with centered independent and identically distributed (i.i.d.) entries of variances of order \(n^{-1}\).  In the
Gaussian case, Voiculescu's asymptotic freeness theorem identifies their joint
normalized \(*\)-moment limit as a free circular family
\cite{voiculescu-1991}.  Strong convergence asks for the strictly finer
conclusion
\[
 \|Q(X_{1,n},\ldots,X_{d,n},X_{1,n}^*,\ldots,X_{d,n}^*)\|
 \longrightarrow
 \|Q(c_1,\ldots,c_d,c_1^*,\ldots,c_d^*)\|
\]
for every fixed noncommutative \(*\)-polynomial \(Q\).  In particular, strong
convergence rules out norm outliers for every polynomial in the tuple.

This conclusion is different from the classical circular law.  The latter
describes the empirical eigenvalue distribution of a single nonnormal matrix
\cite{bai-1997-circular,tao-vu-krishnapur-2010}, whereas strong
\(*\)-convergence controls normalized \(*\)-moments and operator norms of
every fixed polynomial in the whole tuple.  Neither statement is a formal
substitute for the other: in particular, strong \(*\)-convergence alone does
not give Brown-measure convergence for an arbitrary nonnormal polynomial,
because that passage requires additional control of small singular values.
The spectral consequences proved here therefore concern self-adjoint
polynomials and the singular-value spectra of arbitrary polynomials.

Strong convergence for Gaussian matrix polynomials was first proved in the
complex case in~\cite{haagerup-thorbjornsen-2005} and then in the real and
symplectic settings in~\cite{schultz-2005}.
Anderson proved strong convergence for independent Wigner matrices under a
fourth-moment condition, including matrix-valued polynomials
\cite{anderson-2013-polynomial}; see also Male~\cite{male-2012} for strong
convergence in the presence of additional matrices.
Belinschi et al.\ proved spectral confinement and
outlier results for non-Hermitian polynomials in i.i.d. matrices
and deterministic matrices~\cite{belinschi-bordenave-capitaine-cebron-2021}.
Under their balanced-entry assumption, all normalized real and imaginary
coordinates are independent and identically distributed.  Without
deterministic matrices, their theorem gives the upper spectral inclusion for
self-adjoint polynomials in that subclass and, with standard \(*\)-moment
convergence, the corresponding scalar-coefficient strong limit.  Their
deterministic and finite-rank results are complementary to the present scope.

More recently, Bandeira et al.\ extended this phenomenon of strong asymptotic freeness to an extremely general class of Gaussian random matrices~\cite{bvh}. Furthermore, 
nonasymptotic universality estimates were established by comparing the
spectrum and fixed moments of a general sum of independent self-adjoint random
matrices with those of the Gaussian matrix having the same covariance
\cite{brailovskaya-van-handel-2024}.  These results also yield a
broad strong asymptotic freeness theorem for self-adjoint matrix families.

The remaining issue addressed here is that the Hermitian coordinates
\[
 S_{\kappa,n}=\frac{X_{\kappa,n}+X_{\kappa,n}^*}{\sqrt2},
 \qquad
 T_{\kappa,n}=\frac{X_{\kappa,n}-X_{\kappa,n}^*}{i\sqrt2}
\]
do not need to be independent.  We therefore work one self-adjoint linear pencil at
a time.  For bounded entries, every such pencil is a sum of independent
self-adjoint entry matrices.  Its maximal summand norm is
\(O(n^{-1/2})\), its variance parameter is \(O(1)\), and its weak variance
parameter is \(O(n^{-1/2})\).  Spectrum and moment universality consequently
compare it, with summable error probabilities, to the matching Gaussian
pencil.

This pencilwise step is essential even though the available comparison
theorem is stated for self-adjoint random matrices.  Applying a strong
asymptotic freeness theorem directly to the family of Hermitian coordinates
would require independence that is absent when the complex entry has nonzero
pseudo-variance.  Instead, the covariance of each complete pencil is matched
before any strong limit is taken.  Matrix-valued linearization then
reconstructs all joint polynomial norms from these scalar-in-\(n\)
self-adjoint comparisons.

Each Gaussian entry may have arbitrary pseudo-variance.  Its real covariance
matrix nevertheless permits a representation as a complex linear combination
of two independent real Gaussian variables.  The associated
matrix is a linear combination of two real Ginibre matrices, and each real
Ginibre matrix is the complex combination of two independent Wigner matrices.
Anderson's theorem therefore identifies the strong limit.  A short cumulant
calculation shows that the pseudo-variance disappears at leading order and
that the limit is circular.

For the lower norm bound, spectrum comparison alone is insufficient.  We use
moment universality and the concentration inequality for even moment roots
from~\cite{brailovskaya-van-handel-2024}.  The only additional integrability
input is a uniform \(L^2\) bound for normalized traces of powers of the
Gaussian pencils; we prove it directly by Wick expansion.  This yields all
even pencil moments on one probability-one event.  The standard
linearization criterion then supplies joint \(*\)-moment convergence and the
upper norm bound, while faithfulness of the free trace supplies the reverse
norm inequality.

Finally, boundedness of the entry laws is removed at a fixed truncation level
\(K\).  The centered tail at that level is again one fixed i.i.d. law.  The
Bai--Yin estimate, applied separately to its real and imaginary parts, bounds
the tail matrix norm by a constant times its standard deviation, which tends
to zero as \(K\to\infty\).  This order of limits avoids any appeal to a
changing-law triangular-array version of the Bai--Yin theorem.

The proof is a specialization and synthesis of three deep inputs rather than
a replacement for them: the universality estimates of Brailovskaya and van
Handel, Anderson's strong Wigner theorem, and the Bai--Yin norm bound.  The
purpose of the paper is to verify in full detail that these inputs compose for
non-Hermitian i.i.d. matrices beyond the balanced complex subclass above,
including arbitrary complex pseudo-variance (and hence real or degenerate
laws), matrix-valued linearization, one common almost-sure event, and the
finite-fourth truncation passage.

No moment above the fourth is used.  At the level of a single coordinate, the
theorem gives \(\|X_{\kappa,n}\|\to2\sigma_\kappa\), the Bai--Yin edge
scale.  Its content is substantially stronger than this coordinate estimate:
the same probability-one event controls every fixed mixed polynomial in all
colors and their adjoints, including nonnormal polynomials and arbitrary
complex coefficients.

For powers and fixed products, a direct combinatorial proof under the same
fourth-moment assumption was given in
\cite{xiang-chen-zhang-2026}.  That argument identifies the Fuss--Catalan
edge explicitly and controls repeated-label collisions by a defect-sensitive
high-moment enumeration.  The present theorem recovers those norm limits as
special cases, but by a different route: it first proves joint strong
convergence and then evaluates an arbitrary fixed polynomial at the limiting
circular family.  Conversely, the earlier enumeration supplies information
specific to product words that is not part of the present universality
argument.

The paper is organized as follows.  Section~\ref{sec:background} reviews the
free-probability, spectral, linearization, covariance, and Gaussian terminology
used in the proof.  Section~\ref{sec:model} states the model and main theorem.
Section~\ref{sec:inputs} records the precise comparison and linearization
inputs.  Section~\ref{sec:bounded} proves strong convergence for bounded
entries.  Section~\ref{sec:tail} performs the finite-fourth truncation
transfer.  Section~\ref{sec:discussion} discusses the interpretation,
limitations, and possible extensions of the result.

\section{Preliminaries}\label{sec:background}

This section recalls the basic notions used in the statement and proof.  It
introduces no additional hypothesis or intermediate theorem.

\subsection{Tracial \texorpdfstring{\(C^*\)}{C-star}-probability spaces}

A tracial \(C^*\)-probability space is a pair \((\mathcal A,\tau)\), where
\(\mathcal A\) is a unital \(C^*\)-algebra and
\(\tau:\mathcal A\to\mathbb C\) is a positive unital linear functional such
that
\[
 \tau(ab)=\tau(ba)
 \quad\text{for all }a,b\in\mathcal A.
\]
The trace is \emph{faithful} if
\(\tau(a^*a)=0\) implies \(a=0\).  For a self-adjoint element \(h\), the trace
determines a compactly supported probability measure \(\mu_h\) by
\[
 \tau(p(h))=\int_{\mathbb R}p(t)\,d\mu_h(t)
\]
for every polynomial \(p\).  If \(\tau\) is faithful, the support of
\(\mu_h\) is the whole \(C^*\)-spectrum of \(h\).  In particular,
\[
 \|a\|
 =\lim_{r\to\infty}
   \tau\bigl((a^*a)^r\bigr)^{1/(2r)}.
\]
Unital subalgebras
\(\mathcal A_1,\ldots,\mathcal A_\ell\subseteq\mathcal A\) are
\emph{free} if
\[
 \tau(a_1\cdots a_m)=0
\]
whenever each \(a_j\) is centered, consecutive factors come from different
subalgebras, and \(m\geq1\).  A centered self-adjoint element \(s\) is
semicircular with variance \(\sigma^2\) if its spectral distribution is the
semicircle law on \([-2\sigma,2\sigma]\).  Circular elements, defined in
Section~\ref{sec:model}, are the non-Hermitian analogues obtained from two
free semicircular coordinates.

\subsection{Joint \texorpdfstring{\(*\)}{star}-distribution and strong
convergence}

For a tuple \(a=(a_1,\ldots,a_d)\), its joint \(*\)-distribution consists of
the numbers
\[
 \tau\bigl(Q(a_1,\ldots,a_d,a_1^*,\ldots,a_d^*)\bigr)
\]
as \(Q\) ranges over noncommutative \(*\)-polynomials.  Convergence in
\(*\)-distribution means convergence of all these quantities.  For random
matrices, the trace used here is the normalized trace
\(\trn_n=n^{-1}\Tr\).

Strong convergence adds
\[
 \|Q(a^{(n)},(a^{(n)})^*)\|
 \longrightarrow
 \|Q(a,a^*)\|
\]
for every fixed \(Q\).  The word \emph{fixed} is important throughout this
paper: the polynomial, its degree, and its coefficients do not vary with
\(n\).  Likewise, when coefficients lie in \(M_q(\mathbb C)\), the
amplification size \(q\) is fixed before \(n\to\infty\).  The statement that
one probability-one event works for every fixed \(q\) does not assert
uniformity for a sequence \(q=q_n\).

The Introduction explains the distinction between strong
\(*\)-convergence and the circular law.  In particular, strong
\(*\)-convergence does not by itself imply eigenvalue-distribution or
Brown-measure convergence for nonnormal polynomials, for which small
singular-value control is an additional issue.

\subsection{Spectral conventions}

For a self-adjoint element \(h\), write \(\spn(h)\subseteq\mathbb R\) for its
spectrum.  If \(K\) and \(L\) are nonempty compact subsets of a metric space,
their Hausdorff distance is
\[
 d_{\rm H}(K,L)
 =\max\left\{
   \sup_{x\in K}\operatorname{dist}(x,L),
   \sup_{y\in L}\operatorname{dist}(y,K)
 \right\}.
\]
Thus \(d_{\rm H}(K_n,K)\to0\) means both that every point of \(K_n\) is close
to \(K\) and that every point of \(K\) is approximated by points of \(K_n\).

For an arbitrary element \(a\), singular-value information is obtained from
the spectrum of the positive element \(a^*a\) by the square-root map; the
formal definition is given in Section~\ref{sec:model}.  Passing from \(a\) to
\(a^*a\) is useful because the latter is self-adjoint even when \(a\) is
nonnormal.

\subsection{Matrix coefficients, self-adjointification, and linear pencils}

If
\[
 P\in M_q(\mathbb C)\otimes
 \mathbb C\langle x_1,\ldots,x_d,x_1^*,\ldots,x_d^*\rangle,
\]
then \(P(X_n,X_n^*)\) is an element of \(M_q\otimes M_n\), equipped with the
operator norm and the normalized trace \(\trn_q\otimes\trn_n\).  Matrix
coefficients are needed because linearization replaces a polynomial by a
larger affine expression.

The first elementary step is self-adjointification:
\[
 \widehat P=
 \begin{pmatrix}
 0&P\\
 P^*&0
 \end{pmatrix}.
\]
Then \(\widehat P=\widehat P^*\) and
\(\|\widehat P\|=\|P\|\).  Thus a norm question for an arbitrary polynomial
can be embedded into a spectral question for a self-adjoint polynomial.

A self-adjoint linear pencil in self-adjoint variables
\(y_1,\ldots,y_m\) is an expression
\[
 L(y)=A_0\otimes\one+\sum_{j=1}^m A_j\otimes y_j,
 \qquad A_0,\ldots,A_m\in M_q(\mathbb C)_{\rm sa}.
\]
Standard linearization represents the resolvent of a self-adjoint polynomial
as a corner, or equivalently a Schur complement, of the resolvent of a
fixed-size self-adjoint pencil; see~\cite{helton-mai-speicher-2018}.  This is
the standard reason that
matrix-valued self-adjoint pencils appear in strong-convergence arguments.

\subsection{Hermitian coordinates and covariance matching}

Every matrix \(X\) can be recovered from its Hermitian coordinates
\[
 S=\frac{X+X^*}{\sqrt2},
 \qquad
 T=\frac{X-X^*}{i\sqrt2},
 \qquad
 X=\frac{S+iT}{\sqrt2}.
\]
Even when the entries of \(X\) are independent, \(S\) and \(T\) need not be
independent.

For a centered complex random variable \(\xi\), write
\[
 \sigma^2=\E|\xi|^2,
 \qquad
 m=\E[\xi^2].
\]
The number \(m\) is often called the \emph{pseudo-variance}.  It records the
imbalance and correlation between the real and imaginary parts:
\[
 \operatorname{Cov}
 \begin{pmatrix}\operatorname{Re}\xi\\ \operatorname{Im}\xi\end{pmatrix}
 =
 \frac12
 \begin{pmatrix}
  \sigma^2+\operatorname{Re}m&\operatorname{Im}m\\
  \operatorname{Im}m&\sigma^2-\operatorname{Re}m
 \end{pmatrix}.
\]
This covariance matrix is positive semidefinite because
\(|m|\leq\sigma^2\).  Thus specifying both \(\sigma^2\) and \(m\) is
equivalent to specifying the full covariance of the two-dimensional real
vector \((\operatorname{Re}\xi,\operatorname{Im}\xi)\).  The corresponding
Gaussian realization is constructed later in
Lemma~\ref{lem:gaussian-realization}.

\subsection{Gaussian matrix terminology and Wick's formula}

A real Ginibre matrix with variance \(n^{-1}\) is an \(n\times n\) matrix
whose entries are independent real \(N(0,n^{-1})\) variables.  A Hermitian
Wigner matrix has independent centered entries on and above the diagonal
(subject to Hermitian symmetry), with off-diagonal variance \(n^{-1}\);
the diagonal variance may be treated separately.  These conventions explain
the normalization used when the matching Gaussian matrices are decomposed in
the proof.

If \(g_1,\ldots,g_{2e}\) are jointly centered real Gaussian variables, Wick's
formula states that
\[
 \E[g_1\cdots g_{2e}]
 =\sum_{\pi\in\mathcal P_2(2e)}
   \prod_{\{r,s\}\in\pi}\E[g_r g_s],
\]
where \(\mathcal P_2(2e)\) is the set of pair partitions of
\(\{1,\ldots,2e\}\).  Odd centered Gaussian moments vanish.  In the matrix
expansions below, each covariance factor in a Wick pairing identifies the
corresponding matrix indices; this is the elementary input behind the
Gaussian trace-moment count.

\section{Model and main result}\label{sec:model}

We write \(\trn_N=N^{-1}\Tr\) for the normalized trace on \(M_N(\mathbb C)\).
All matrix norms are operator norms.

\begin{definition}[The i.i.d.\ entry model]\label{def:iid-entry-model}
Fix \(d\in\mathbb N\).  Given a probability space, let
\[
 \bigl(\xi^{(\kappa)}_{ij}\bigr)_{i,j\geq1},
 \qquad 1\leq\kappa\leq d,
\]
be mutually independent infinite arrays.  Within each color \(\kappa\), the
entries are i.i.d.\ copies of a complex random variable \(\xi^{(\kappa)}\)
satisfying
\[
 \E\xi^{(\kappa)}=0,
 \qquad
 \E|\xi^{(\kappa)}|^2=\sigma_\kappa^2,
 \qquad
 \E|\xi^{(\kappa)}|^4<\infty.
\]
For \(n\geq1\), set
\[
 X_{\kappa,n}
 =\frac1{\sqrt n}
   \bigl(\xi^{(\kappa)}_{ij}\bigr)_{1\leq i,j\leq n}.
\]
Thus, for each color, the unnormalized entry blocks are nested upper-left
corners of one infinite array; the matrix at size \(n\) is obtained by applying
the normalization \(n^{-1/2}\) to the corresponding corner.  This coupling
across \(n\) is used for every almost-sure statement below.  No restriction is
placed on
\(\E[(\xi^{(\kappa)})^2]\).
\end{definition}

Let \((\mathcal A,\tau)\) be a tracial \(C^*\)-probability space with faithful
trace.  A circular element of variance \(\sigma^2\) is an element
\(c=(s_1+is_2)/\sqrt2\), where \(s_1,s_2\) are free centered semicircular
elements with \(\tau(s_1^2)=\tau(s_2^2)=\sigma^2\).  A family is circular if
all of its real and imaginary semicircular coordinates are free.

\begin{definition}[Strong convergence]
A random matrix tuple \(Y_n\) converges almost surely strongly to a tuple
\(y\) if, on one probability-one event, for
every noncommutative \(*\)-polynomial \(Q\),
\[
 \trn_n Q(Y_n,Y_n^*)\longrightarrow\tau(Q(y,y^*)),
 \qquad
 \|Q(Y_n,Y_n^*)\|\longrightarrow\|Q(y,y^*)\|.
\]
\end{definition}

\begin{theorem}[Finite-fourth strong circular limit]\label{thm:main}
Let \(X_{1,n},\ldots,X_{d,n}\) satisfy
Definition~\ref{def:iid-entry-model}.  Let \(c_1,\ldots,c_d\) be a free
circular family in a tracial \(C^*\)-probability space with faithful trace,
with
\[
 \tau(c_\kappa c_\lambda^*)=\delta_{\kappa\lambda}\sigma_\kappa^2.
\]
Then \((X_{1,n},\ldots,X_{d,n})\) converges almost surely strongly in
\(*\)-distribution to \((c_1,\ldots,c_d)\).
Moreover, on the same probability-one event, for every \(q\geq1\) and every
\[
 Q\in M_q(\mathbb C)\otimes
 \mathbb C\langle x_1,\ldots,x_d,x_1^*,\ldots,x_d^*\rangle,
\]
one has
\begin{align*}
 \|Q(X_n,X_n^*)\|&\longrightarrow\|Q(c,c^*)\|,\\
 (\trn_q\otimes\trn_n)Q(X_n,X_n^*)
 &\longrightarrow(\trn_q\otimes\tau)Q(c,c^*).
\end{align*}
\end{theorem}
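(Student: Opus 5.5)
The plan follows the route sketched in the Introduction, in three stages: bounded entries, identification of the Gaussian limit, and truncation.

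\emph{Stage 1: bounded entries.} Assume each \(\xi^{(\kappa)}\) is bounded. Fix a self-adjoint pencil \(L_n=A_0\otimes\one+\sum_\kappa (B_\kappa\otimes X_{\kappa,n}+B_\kappa^*\otimes X_{\kappa,n}^*)\) with \(A_0,B_\kappa\in M_q(\mathbb C)\). This pencil is a sum of the independent self-adjoint summands
\[
Z_{\kappa ij}=n^{-1/2}\bigl(\xi^{(\kappa)}_{ij}B_\kappa\otimes E_{ij}+\overline{\xi^{(\kappa)}_{ij}}B_\kappa^*\otimes E_{ji}\bigr).
\]
I would check three bounds:
\begin{itemize}
\item \(\|Z_{\kappa ij}\|=O(n^{-1/2})\);
\item the variance parameter \(\|\E (L_n-A_0)^2\|\) is \(O(1)\);
\item the weak variance \(\sup_{\|v\|=\|w\|=1}\E|\langle v,(L_n-A_0)w\rangle|^2\) is \(O(1/n)\).
\end{itemize}
The Brailovskaya--van Handel spectrum universality then gives \(d_{\rm H}(\spn L_n,\spn G_n)\to0\) with summable failure probabilities. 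Here \(G_n\) is the Gaussian pencil with the same covariance: its entries \(g_{ij}\) have the same \(\sigma_\kappa^2\) and the same pseudo-variance \(m_\kappa\). Borel--Cantelli makes this almost sure.

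For the trace side, I would combine moment universality for \(\E\trn L_n^{2p}\) with their concentration of \((\trn L_n^{2p})^{1/2p}\). To make this work I need the uniform Wick bound \(\E(\trn G_n^{2p})^2\le C_p\). That bound follows by noting that a Wick expansion into pairings only produces \(O(1)\) contributions after normalization.

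Only countably many pencils are needed: rational coefficients and rational \(p\) suffice, by continuity in the coefficients. So one probability-one event handles all of them.

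\emph{Stage 2: identifying the Gaussian limit.} I would realize each Gaussian entry as \(g=\alpha u+\beta v\) with \(u,v\) independent real \(N(0,1)\) and \(\alpha,\beta\in\mathbb C\) chosen to match \((\sigma^2,m)\). Then \(G\)-matrices are complex combinations of two independent real Ginibre matrices \(U,V\). Each real Ginibre matrix \(U\) is \((W_1+iW_2)/\sqrt2\), where \(W_1=(U+U^T)/\sqrt2\) is real symmetric Wigner and \(W_2=(U-U^T)/(i\sqrt2)\) is Hermitian Wigner with purely imaginary entries. These are independent, since symmetric and antisymmetric parts of a Gaussian matrix are independent. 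Anderson's theorem gives strong convergence of all the \(W\)'s to free semicirculars, so the Gaussian pencils converge strongly to the same pencils evaluated at \(\alpha u_\infty+\beta v_\infty\), with \(u_\infty,v_\infty\) free circular.

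A covariance computation then shows \(\alpha u_\infty+\beta v_\infty\) is circular of variance \(|\alpha|^2+|\beta|^2=\sigma^2\). The pseudo-variance \(\alpha^2+\beta^2=m\) drops out, because \(\tau(c^2)=0\) for circular \(c\). Combining this with Stage 1 yields, on one event, for every pencil:
\begin{itemize}
\item spectral convergence \(d_{\rm H}(\spn L_n,\spn L(c,c^*))\to0\);
\item even-moment convergence \(\trn L_n^{2p}\to(\trn_q\otimes\tau)L(c,c^*)^{2p}\).
\end{itemize}

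\emph{Stage 3: from pencils to polynomials, then truncation.} Linearization with self-adjointification \(\widehat P\) converts these into joint \(*\)-moment convergence and the upper bound \(\limsup\|Q(X_n)\|\le\|Q(c)\|\) for matrix-coefficient \(Q\). The lower bound comes from moment convergence together with faithfulness:
\[
\liminf_n\|Q\|\ge\bigl((\trn\otimes\tau)(Q^*Q)^r\bigr)^{1/2r}\to\|Q(c)\|\quad\text{as }r\to\infty.
\]

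For finite fourth moments, fix \(K\) and write \(\xi=\xi\one_{|\xi|\le K}-\E[\cdot]+\text{tail}\). The bounded part is handled above, with variance \(\sigma_K^2\to\sigma^2\). The tail matrix satisfies \(\limsup\|Y^{K}_n\|\le C\,(\E|\text{tail}|^2)^{1/2}\) almost surely, by Bai--Yin applied to its real and imaginary parts; this right-hand side tends to \(0\) as \(K\to\infty\). Polynomials are Lipschitz on bounded sets, and \(\|Q(c^{(K)})\|\to\|Q(c)\|\) as the variances converge, so an \(\varepsilon/3\) argument over rational \(K\) completes the proof.

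The main obstacle is Stage 1's trace and lower-bound part. There the concentration and moment universality must be made summable in \(n\) uniformly across the countable pencil family, and the required Gaussian \(L^2\) trace bound must be verified carefully via the genus count in Wick's formula.
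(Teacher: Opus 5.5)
Your proposal is correct and follows essentially the same route as the paper. Both arguments use pencilwise Brailovskaya--van Handel spectrum comparison and moment/concentration comparison against the pseudo-variance-matched Gaussian pencil, with a Wick $L^2$ bound supplying uniform integrability. Both identify the Gaussian limit by splitting real Ginibre matrices into independent Wigner matrices and applying Anderson's theorem. Both then use the linearization criterion plus faithfulness for the lower norm bound, and finish with a fixed-level centered truncation, Bai--Yin on the tail, and $n\to\infty$ taken before $K\to\infty$. The points you leave implicit are routine: coupling the Gaussian arrays with the original ones (the paper uses a product space and Fubini), and the eventual uniform bound on $\|X_{\kappa,n}\|$ needed for the Lipschitz step (obtainable from the triangle inequality with the fixed-$K$ result, or from Bai--Yin directly).
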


\begin{corollary}\label{cor:homogeneous}
For every fixed noncommutative polynomial \(P\), in particular for every fixed
homogeneous polynomial with arbitrary complex coefficients,
\[
 \|P(X_{1,n},\ldots,X_{d,n})\|
 \longrightarrow
 \|P(c_1,\ldots,c_d)\|
 \qquad\text{almost surely}.
\]
\end{corollary}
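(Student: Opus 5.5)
The statement to prove is Corollary~\ref{cor:homogeneous}, and I would obtain it as a direct specialization of Theorem~\ref{thm:main}. A noncommutative polynomial \(P\in\mathbb C\langle x_1,\ldots,x_d\rangle\), homogeneous or not and with arbitrary complex coefficients, is in particular a noncommutative \(*\)-polynomial that simply does not involve the adjoint variables \(x_1^*,\ldots,x_d^*\). So I would set
\[
 Q(x_1,\ldots,x_d,x_1^*,\ldots,x_d^*)=P(x_1,\ldots,x_d),
\]
viewed as an element of \(M_1(\mathbb C)\otimes\mathbb C\langle x_1,\ldots,x_d,x_1^*,\ldots,x_d^*\rangle\) with \(q=1\). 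Evaluating gives \(Q(X_n,X_n^*)=P(X_{1,n},\ldots,X_{d,n})\) and \(Q(c,c^*)=P(c_1,\ldots,c_d)\).

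By Theorem~\ref{thm:main} there is one probability-one event on which \(\|Q(X_n,X_n^*)\|\to\|Q(c,c^*)\|\) for every such \(Q\). On that event the claimed convergence \(\|P(X_{1,n},\ldots,X_{d,n})\|\to\|P(c_1,\ldots,c_d)\|\) holds. The event does not depend on \(P\), so the conclusion even holds simultaneously for all \(P\), which is stronger than the per-\(P\) almost-sure statement.

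There is no genuine obstacle. The only points to check are that the coefficients are fixed, independent of \(n\), as the theorem requires, and that nonnormality of \(P(X_n)\) causes no trouble. The latter is fine because the theorem controls operator norms of arbitrary, not necessarily self-adjoint, polynomials; internally it does this through the self-adjointification \(\widehat P\), which has the same norm as \(P\).
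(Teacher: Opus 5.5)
Your proposal is correct and is exactly the paper's argument: the paper also proves this corollary by applying Theorem~\ref{thm:main} to the $*$-polynomial $Q=P$, which involves no adjoint variables. Your closing remark about how the theorem handles nonnormal polynomials internally is not needed for the corollary.
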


\begin{corollary}[Fuss--Catalan product edge]\label{cor:fuss-catalan-edge}
Suppose \(\sigma_1=\cdots=\sigma_d=\sigma\), fix \(k\geq1\), and let
\(i_1,\ldots,i_k\in\{1,\ldots,d\}\), with repetitions allowed.  Then
\[
 \|X_{i_1,n}\cdots X_{i_k,n}\|
 \longrightarrow
 \sigma^k\gamma_k
 \qquad\text{almost surely},
 \qquad
 \gamma_k=\sqrt{\frac{(k+1)^{k+1}}{k^k}}.
\]
\end{corollary}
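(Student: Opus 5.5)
The statement to prove is Corollary~\ref{cor:fuss-catalan-edge}. I will deduce it from Theorem~\ref{thm:main}, or more directly from Corollary~\ref{cor:homogeneous} applied to the monomial \(P=x_{i_1}\cdots x_{i_k}\). Almost surely this gives
\[
\|X_{i_1,n}\cdots X_{i_k,n}\|\to\|c_{i_1}\cdots c_{i_k}\|.
\]
Scaling \(c_\kappa=\sigma u_\kappa\) with \(u_\kappa\) standard circular pulls out the factor \(\sigma^k\). It therefore remains to show that \(\|u_{i_1}\cdots u_{i_k}\|=\gamma_k\) for every word, with repetitions allowed. Since \(\tau\) is faithful,
\[
\|a\|^2=\|a^*a\|=\max\operatorname{supp}\mu_{a^*a}.
\]
So the task is to identify the distribution of \(a^*a\) for \(a=u_{i_1}\cdots u_{i_k}\).

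The key step is to show that this distribution does not depend on the word. I would use the free-probability fact that a circular element is \(R\)-diagonal, together with \(*\)-freeness of distinct colors. For repeated colors, say \(u^2\), the product of \(R\)-diagonal elements is \(R\)-diagonal only when they are \(*\)-free, which is not the case here. I would therefore use the known identity that the \(*\)-distribution of \(u^k\) for a circular \(u\) equals that of a product of \(k\) \(*\)-free circulars. The singular values of \(u^k\) are governed by the Fuss--Catalan law (Haagerup--Larsen, and Larsen's computation of powers of circular elements). More generally, I would cite that for any word in free circulars, \(\tau((a^*a)^r)\) equals the Fuss--Catalan number \(\frac{1}{kr+1}\binom{(k+1)r}{r}\).

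If a self-contained route is preferred, I would verify this moment identity combinatorially. Expand \(\tau((a^*a)^r)\) by the free Wick formula for circulars: only noncrossing pairings of \(c\) with \(c^*\) of the same color contribute. Then check that the admissible pairings are in bijection with \(k\)-divisible noncrossing structures regardless of which labels coincide. The point is that the alternating \(a,a^*\) pattern forces the \(j\)-th letter of a copy of \(a\) to pair only with the \(j\)-th letter of a copy of \(a^*\). Noncrossingness rules out any other pairing even when colors repeat, so color coincidences add no new pairings. I expect this bijection to be the main obstacle, and I would attempt it by induction on \(k\), peeling off the outermost letter.

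Given the moments \(m_r=\frac{1}{kr+1}\binom{(k+1)r}{r}\), Stirling's formula gives
\[
m_r^{1/r}\to\frac{(k+1)^{k+1}}{k^k}.
\]
Hence
\[
\|a\|=\lim_r \tau((a^*a)^r)^{1/(2r)}=\sqrt{\frac{(k+1)^{k+1}}{k^k}}=\gamma_k,
\]
using the norm formula from the preliminaries. Multiplying by \(\sigma^k\) completes the proof, and the almost-sure event is the single event supplied by Theorem~\ref{thm:main}.
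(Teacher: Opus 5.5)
Your proof is correct, but it obtains the free-probability identity \(\|c_{i_1}\cdots c_{i_k}\|=\sigma^k\gamma_k\) by a different route from the paper. The first step, reducing to this identity via Theorem~\ref{thm:main}, is the same in both.

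\textbf{The paper's route.} The paper computes nothing in the free algebra. It takes the Gaussian case of the earlier matrix theorem \cite[Theorem~1.3]{xiang-chen-zhang-2026}, which gives \(\|Y_{i_1,n}\cdots Y_{i_k,n}\|\to\sigma^k\gamma_k\) through a high-moment enumeration of product words. It then combines this with Gaussian strong convergence (Proposition~\ref{prop:gaussian-limit}), which identifies the same matrix limit as \(\|c_{i_1}\cdots c_{i_k}\|\). That is a one-line transfer, but it leans on the external enumeration and runs matrix \(\to\) free \(\to\) matrix.

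\textbf{Your route.} Yours is intrinsic: free Wick formula, Fuss--Catalan count, Stirling, and the faithful-trace norm formula. The corollary then rests only on Theorem~\ref{thm:main} and standard free combinatorics, and your argument explains why repeated labels are harmless.

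\textbf{The step you flag as the main obstacle.} It needs no induction.
\begin{enumerate}
\item[(i)] Read the starred letters of \((w^*w)^r\) as up-steps and the unstarred ones as down-steps, which gives the path \((U^kD^k)^r\).
\item[(ii)] The letter index of every step is \(k\) minus the lower height of that step.
\item[(iii)] Because \(\kappa_2(u,u)=\kappa_2(u^*,u^*)=0\), each contributing pair joins an up-step to a down-step.
\item[(iv)] Noncrossingness makes the enclosed stretch balanced, so the two steps of a pair lie at the same level. They therefore carry the same letter index, hence the same color.
\end{enumerate}
Consequently \(\tau((w^*w)^r)\) equals the number of noncrossing up/down pairings of \((U^kD^k)^r\), for every word. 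This is the same count as for a word with all labels distinct, namely \(\frac{1}{kr+1}\binom{(k+1)r}{r}\), the \(r\)-th moment of \(\pi^{\boxtimes k}\) for the free Poisson law \(\pi\).
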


\begin{proof}
Theorem~\ref{thm:main} reduces the limit to
\(\|c_{i_1}\cdots c_{i_k}\|\).  For arbitrary fixed label words with
repetitions, the identity
\[
 \|c_{i_1}\cdots c_{i_k}\|=\sigma^k\gamma_k
\]
follows from the Gaussian specialization of
\cite[Theorem~1.3]{xiang-chen-zhang-2026} together with Gaussian strong
convergence.
\end{proof}

\begin{remark}
No assumption is made on
\(m_\kappa:=\E[(\xi^{(\kappa)})^2]\).  In particular, the theorem covers real
entry laws, for which \(m_\kappa=\sigma_\kappa^2\), as well as genuinely complex
and degenerate two-dimensional laws.
\end{remark}

For an element \(a\) of a \(C^*\)-algebra, write
\[
 \operatorname{sv}(a)
 =\{\sqrt{\lambda}:\lambda\in\spn(a^*a)\}
\]
for its singular-value spectrum.  For a finite matrix this is the set of its
singular values, with multiplicities suppressed.

\begin{corollary}[Spectral consequences]\label{cor:spectral-consequences}
On the probability-one event of Theorem~\ref{thm:main}, the following hold.
\begin{enumerate}
\item[(1)] If \(Q=Q^*\) is a fixed self-adjoint \(*\)-polynomial, then
\[
 d_{\rm H}\bigl(\spn(Q(X_n,X_n^*)),
                    \spn(Q(c,c^*))\bigr)\longrightarrow0.
\]
\item[(2)] For every fixed \(*\)-polynomial \(Q\),
\[
 d_{\rm H}\bigl(\operatorname{sv}(Q(X_n,X_n^*)),
                    \operatorname{sv}(Q(c,c^*))\bigr)\longrightarrow0.
\]
\item[(3)] If \(Q(c,c^*)\) is invertible, then \(Q(X_n,X_n^*)\) is invertible
for all sufficiently large \(n\), and
\[
 \|Q(X_n,X_n^*)^{-1}\|\longrightarrow\|Q(c,c^*)^{-1}\|.
\]
\item[(4)] For every \(1\leq\kappa\leq d\),
\[
 \|X_{\kappa,n}\|\longrightarrow2\sigma_\kappa.
\]
\end{enumerate}
Assertions~(1)--(3) also hold when \(Q\) has coefficients in a fixed matrix
algebra.
\end{corollary}

\subsection{Scope of the assumptions}

The nested-corner construction in Definition~\ref{def:iid-entry-model} is a
coupling convention across matrix sizes, not an additional restriction on the
law at any fixed size.  For each \(n\), the entries of \(X_{\kappa,n}\) have
the usual i.i.d. distribution.  The coupling gives a precise meaning to an
almost-sure limit as \(n\to\infty\) and is used when the Borel--Cantelli and
Bai--Yin events are intersected over all sizes.  An almost-sure assertion is
not invariant under an arbitrary recoupling of the sequence, so fixing this
common probability space is part of the formulation rather than a hidden
independence assumption.

The fourth-moment hypothesis enters only in the passage from bounded entries
to the original laws.  The bounded-entry argument of
Section~\ref{sec:bounded} uses boundedness to obtain
\(R(L_n)=O(n^{-1/2})\).  In Section~\ref{sec:tail}, the Bai--Yin bound is
applied to the real and imaginary parts of each fixed centered tail law and
to the original entry law; this is precisely where finite fourth moments are
used.  Thus no higher moment is implicit in the Gaussian comparison or in the
Wick integrability estimate.  The fourth-moment assumption is sharp for a
theorem uniform over the present model class.  Indeed, already in the
one-color real-valued subclass, the choice \(Q(x)=x\) would force
\(\|X_n\|\to2\sigma\); the sample-covariance necessity result of
Bai, Silverstein, and Yin~\cite{bai-silverstein-yin-1988} rules out this
almost-sure edge limit when the fourth moment is infinite.  This sharpness
statement concerns the tuple-level theorem and does not assert that every
individual polynomial requires a fourth moment.

The pseudo-variance is retained throughout the finite-dimensional comparison.
Indeed, the covariance matrix in \eqref{eq:real-covariance} matches it exactly.
The identity \eqref{eq:exact-variance} shows that its contribution to the
variance sum of a fixed pencil is of order \(n^{-1}\), while
Proposition~\ref{prop:gaussian-limit} identifies the limiting Gaussian
coordinates as circular.  Consequently, real entry laws and complex entry
laws without rotational symmetry are covered without first replacing them by
circularly symmetric variables.

Finally, the matrix-coefficient conclusion in Theorem~\ref{thm:main} is not
merely notational.  Self-adjoint linearization tests a polynomial through
pencils of varying fixed sizes, and the same amplified convergence yields the
spectral and inverse-stability conclusions in
Corollary~\ref{cor:spectral-consequences}.

\section{Comparison and linearization}\label{sec:inputs}

Let us define
\[
 H=H_0+\sum_{\alpha\in I}Z_\alpha\in M_D(\mathbb C)_{\rm sa},
\]
where \(H_0\) is deterministic and the \(Z_\alpha\) are independent,
centered, self-adjoint random matrices.  We use the parameters
\begin{align*}
 R(H)&=\max_{\alpha\in I}\|Z_\alpha\|_{L^\infty},\\
 \sigma(H)^2&=\left\|\sum_{\alpha\in I}\E Z_\alpha^2\right\|,\\
 \sigma_*(H)^2
 &=\sup_{\|u\|=\|v\|=1}
   \sum_{\alpha\in I}\E|\langle u,Z_\alpha v\rangle|^2.
\end{align*}
The last equality follows from independence and centering and agrees with the
weak variance parameter in~\cite{brailovskaya-van-handel-2024}.  Let \(G\) be
the self-adjoint Gaussian matrix with the same mean and real entry covariance
as \(H\).

\begin{theorem}[Brailovskaya--van Handel comparison estimates]
\label{thm:bvh-inputs}
With \(H\) and \(G\) as above, the following statements hold.  For
\(t\geq0\),
\begin{equation}\label{eq:bvh-spectrum}
 \mathbb P\left\{
 d_{\rm H}(\spn(H),\spn(G))>C\varepsilon_H(t)
 \right\}\leq D e^{-t},
\end{equation}
where
\begin{equation}\label{eq:bvh-spectrum-error}
 \varepsilon_H(t)
 =\sigma_*(H)t^{1/2}
  +R(H)^{1/3}\sigma(H)^{2/3}t^{2/3}
  +R(H)t.
\end{equation}
Here \(d_{\rm H}\) is Hausdorff distance.  For every fixed \(r\geq1\),
Theorem~2.9 with its parameter \(q=\infty\) gives
\begin{equation}\label{eq:bvh-moment}
 \left|
  \bigl(\E\trn_D H^{2r}\bigr)^{1/(2r)}
  -\bigl(\E\trn_D G^{2r}\bigr)^{1/(2r)}
 \right|
 \leq C_r\bigl(R(H)^{1/3}\sigma(H)^{2/3}+R(H)\bigr).
\end{equation}
Finally, for \(t\geq r\),
\begin{align}
& \mathbb P\bigg\{
 \left|(\trn_D H^{2r})^{ \frac{1}{2r}}
       -\bigl(\E[\trn_D H^{2r}]\bigr)^{ \frac{1}{2r}}\right|
 \notag 
 >C\left(\sigma_*(H)
       +R(H)^{1/2}\bigl(\E[\trn_DH^{2r}]\bigr)^{ \frac{1}{4r}}\right)t^{\frac{1}{2}}
       +CR(H)t\bigg\} \\
 & \leq 2e^{-t}.
\label{eq:bvh-concentration}
\end{align}
The constants in \eqref{eq:bvh-spectrum} and
\eqref{eq:bvh-concentration} are universal, while \(C_r\) may depend on \(r\).
These are Theorems~2.6 and~2.9, with \(q=\infty\), and Lemma~9.20
of~\cite{brailovskaya-van-handel-2024}, written with normalized traces.
\end{theorem}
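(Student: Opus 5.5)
The plan is to treat the statement as a translation of the cited results into the notation of this paper. No new probabilistic estimate is needed; the work is to check that hypotheses, parameters and normalizations match. First I will check that \(H=H_0+\sum_\alpha Z_\alpha\), with \(H_0\) deterministic and the \(Z_\alpha\) independent, centered and self-adjoint, is exactly the model of~\cite{brailovskaya-van-handel-2024}. Their \(X_0\) is our \(H_0\), and their Gaussian model has the same mean and the same real entry covariance, which is our \(G\). I will also check that \(R(H)\), \(\sigma(H)\) and \(\sigma_*(H)\) agree with their parameters. For \(\sigma_*\), I will note that for unit vectors \(u,v\) the variance of \(\langle u,(H-\E H)v\rangle\) equals \(\sum_\alpha\E|\langle u,Z_\alpha v\rangle|^2\). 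This holds because the cross terms vanish by independence and centering, so the supremum of these variances is their weak variance parameter.

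For~\eqref{eq:bvh-spectrum}, I will invoke Theorem~2.6 of~\cite{brailovskaya-van-handel-2024}. It gives, with probability at least \(1-De^{-t}\), both inclusions \(\spn(H)\subseteq\spn(G)+[-C\varepsilon_H(t),C\varepsilon_H(t)]\) and the reverse one; if the reverse inclusion is stated separately, I will take a union bound. Together the two inclusions are precisely the Hausdorff bound. A factor \(2\) in the failure probability can be absorbed by replacing \(t\) with \(t+\log 2\) and enlarging the universal constant \(C\); this is legitimate since \(\varepsilon_H(t+\log2)\le C'\varepsilon_H(t)\) for \(t\ge1\). For \(t<1\), I will use monotonicity in \(t\) at the cost of another universal constant, after checking that the trivial bound covers the small-\(t\) range.

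For~\eqref{eq:bvh-moment} and~\eqref{eq:bvh-concentration}, the only issue is the trace normalization. If the cited statements use \(\Tr\) rather than \(\trn_D\), I will divide through by \(D^{1/(2r)}\). In~\eqref{eq:bvh-moment}, both sides scale by the same factor, so the inequality with unnormalized error becomes one with error multiplied by \(D^{-1/(2r)}\le1\), which is stronger than what is claimed. In the concentration lemma, after dividing by \(D^{1/(2r)}\), the term \(R^{1/2}(\E\Tr H^{2r})^{1/(4r)}\) becomes \(R^{1/2}(\E\trn_D H^{2r})^{1/(4r)}D^{-1/(4r)}\). This is at most the normalized term appearing in~\eqref{eq:bvh-concentration}, and the terms \(\sigma_*\sqrt t\) and \(Rt\) likewise only shrink. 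Hence the event in~\eqref{eq:bvh-concentration} is contained in the event controlled by Lemma~9.20, and the bound \(2e^{-t}\) for \(t\ge r\) follows.

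I expect the main obstacle to be this bookkeeping rather than anything conceptual. Specifically, I need to confirm that the \(q=\infty\) case of Theorem~2.9 is stated for \(2r\)-th moment roots with the same mean-zero Gaussian coupling, and that the constants in the spectrum and concentration bounds are genuinely universal after the normalization changes. At that point I will consult the precise form of the cited statements and adjust constants if needed.
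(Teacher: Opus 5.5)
Your proposal is correct and matches the paper, which gives no separate proof: it presents the statement as a transcription of Theorems~2.6 and~2.9 (with $q=\infty$) and Lemma~9.20 of~\cite{brailovskaya-van-handel-2024} into normalized-trace notation. Your parameter identifications, and your check that replacing $\Tr$ by $\trn_D$ only weakens the moment and concentration bounds (via the factors $D^{-1/(2r)},D^{-1/(4r)}\le1$), are exactly the bookkeeping this transcription needs. Your union-bound and small-$t$ contingency for~\eqref{eq:bvh-spectrum} is unnecessary, because the paper takes the cited spectral theorem to be the two-sided Hausdorff bound already. As written, that contingency would also not cover $t<1$ when $D\le2$: the bad events grow as $t$ decreases, and $De^{-t}$ can fall below $1$ there.
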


We also use the following form of the linearization criterion.  Its two
hypotheses are precisely those of Proposition~9.18 of
\cite{brailovskaya-van-handel-2024}; the reverse norm inequality is included
below.

\begin{proposition}[Self-adjoint pencil criterion]\label{prop:pencil-criterion}
Let \(H_{1,n},\ldots,H_{m,n}\) be self-adjoint random matrices, and let
\(s_1,\ldots,s_m\) be a semicircular family in a tracial
\(C^*\)-probability space with faithful trace.  Suppose that, on one
probability-one event, the following statements hold for every \(q\geq1\) and
all self-adjoint \(A_0,\ldots,A_m\in M_q(\mathbb C)\):
\begin{enumerate}
\item[(1)] for every \(\epsilon>0\),
\[
 \spn\!\left(A_0\otimes I_n+\sum_{j=1}^mA_j\otimes H_{j,n}\right)
 \subseteq
 \spn\!\left(A_0\otimes\one+\sum_{j=1}^mA_j\otimes s_j\right)
 +[-\epsilon,\epsilon]
\]
for all sufficiently large \(n\);
\item[(2)] for every \(r\geq1\),
\[
 \trn_{qn}\!\left(A_0\otimes I_n+
       \sum_{j=1}^mA_j\otimes H_{j,n}\right)^{2r}
 \longrightarrow
 (\trn_q\otimes\tau)\!\left(A_0\otimes\one+
       \sum_{j=1}^mA_j\otimes s_j\right)^{2r}.
\]
\end{enumerate}
Then \((H_{1,n},\ldots,H_{m,n})\) converges almost surely strongly in
\(*\)-distribution to \((s_1,\ldots,s_m)\).  On the same event, norm and
normalized-trace convergence also hold for every polynomial with fixed matrix
coefficients.
\end{proposition}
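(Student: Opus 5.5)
The plan has three steps: (a) extract normalized-trace convergence for all matrix-coefficient polynomials from hypothesis (2); (b) obtain the upper norm bound by combining linearization with the spectral inclusion (1); (c) obtain the reverse norm inequality from (a) and faithfulness of \(\tau\). Throughout we work on the single probability-one event on which (1) and (2) hold for all \(q\), all self-adjoint coefficients, all \(\epsilon\) and all \(r\). Since the hypotheses quantify over uncountably many coefficients, I will note that every conclusion below uses only countably many pencils, or follows from them by continuity.

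\emph{Trace convergence.} Fix a word \(y_{j_1}\cdots y_{j_k}\). Choose \(q=k+1\) and take \(A_0=0\) and
\[
A_j=\sum_{\ell:\,j_\ell=j}\bigl(t_\ell e_{\ell,\ell+1}+\bar t_\ell e_{\ell+1,\ell}\bigr),
\]
so that a cyclic path of matrix units runs through the prescribed colors. Then \(\trn_{qn}(\sum_j A_j\otimes H_{j,n})^{2r}\) is a polynomial in \((t_\ell,\bar t_\ell)\) whose coefficients are finite combinations of normalized traces of words in the \(H_{j,n}\). The same holds for the limit expression with \(s_j\) in place of \(H_{j,n}\). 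With \(2r\ge 2k\), suitable matrix-unit paths isolate \(\trn_n(H_{j_1,n}\cdots H_{j_k,n})\) as a single coefficient; if needed, one adds a closing edge or a scalar shift \(A_0=a\,e_{11}\) to reach odd lengths and to separate the word from its reverse. Equating polynomial coefficients in (2) then gives convergence of every mixed moment. Only countably many rational choices of the \(t_\ell\) are needed, since the coefficients are determined by finitely many evaluations. Consequently
\[
(\trn_q\otimes\trn_n)P(H_n)\to(\trn_q\otimes\tau)P(s)
\]
for every polynomial \(P\) with fixed matrix coefficients.

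\emph{Upper norm bound.} It suffices to treat self-adjoint \(P\) with matrix coefficients, because \(\|P\|^2=\|P^*P\|\). By the Helton--Mai--Speicher linearization there is a self-adjoint pencil \(L_z=A_0(z)\otimes\one+\sum_j A_j\otimes y_j\), with \(A_0(z)\) affine in real \(z\), such that \(z\notin\spn(P(y))\) if and only if \(L_z(y)\) is invertible. Moreover, \(\|(P(y)-z)^{-1}\|\) is controlled by \(\|L_z(y)^{-1}\|\) through the Schur complement, and this holds uniformly in the arguments. Fix a compact interval \(J\subset\mathbb R\setminus\spn(P(s))\) with \(J\subseteq[-M,M]\); here \(M\) comes from a crude a priori bound on \(\|P(H_n)\|\), which follows from (1) applied to the pencils \(\pm A_j\otimes H_{j,n}\). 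For each \(z\in J\) we have \(\operatorname{dist}(0,\spn L_z(s))\ge\delta(z)>0\), and \(\delta\) is continuous. Cover \(J\) by finitely many rational points \(z_i\) and apply (1) with \(\epsilon=\delta(z_i)/2\). The perturbation bound \(\|L_z-L_{z_i}\|\le C|z-z_i|\) then shows that \(L_z(H_n)\) is invertible for all \(z\in J\) and all large \(n\). Hence \(\spn P(H_n)\cap J=\emptyset\) eventually, and so \(\limsup\|P(H_n)\|\le\|P(s)\|\). I expect this uniform-in-\(z\) passage, together with the a priori boundedness needed to restrict to a compact set of \(z\), to be the main technical obstacle.

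\emph{Reverse inequality.} For \(a=P(s)\) and each \(r\), trace convergence gives
\[
\|P(H_n)\|\ge\bigl((\trn_q\otimes\trn_n)(P^*P)^r(H_n)\bigr)^{1/2r}\to\bigl((\trn_q\otimes\tau)(a^*a)^r\bigr)^{1/2r}.
\]
Since \(\trn_q\otimes\tau\) is faithful, the right side tends to \(\|a\|\) as \(r\to\infty\), so \(\liminf\|P(H_n)\|\ge\|P(s)\|\). Combined with the upper bound this gives norm convergence, and together with trace convergence it establishes strong convergence with matrix coefficients on the common event.
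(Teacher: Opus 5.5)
Your proposal is correct and has the same three-part structure as the paper. The paper takes the upper norm bound and the convergence of all normalized traces directly from Proposition~9.18(a),(b) of Brailovskaya--van Handel; you instead re-derive them, by self-adjoint linearization and by coefficient extraction from hypothesis (2). The reverse inequality is then proved via \((\trn_q\otimes\trn_n)(P^*P)^r\) and faithfulness of \(\trn_q\otimes\tau\), exactly as you do.

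The one point to tighten is step (a). The open path \(e_{\ell,\ell+1}\), \(\ell\leq k\), in \(M_{k+1}(\mathbb C)\) only supports closed walks that traverse each edge an even number of times. The closing edge (for instance carried by \(A_0\), with the parity of the length adjusted) is therefore required, not optional, to isolate \(\trn_n(H_{j_1,n}\cdots H_{j_k,n})\) and separate it from its reverse.
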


The formulation above also covers semicircular coordinates with unequal
variances: rescale every nonzero coordinate and absorb its standard deviation
into the corresponding coefficient matrix; coordinates of variance zero may
simply be omitted.

\begin{proof}
Proposition~9.18(a) of~\cite{brailovskaya-van-handel-2024} gives, for every
noncommutative polynomial \(Q\),
\[
 \limsup_{n\to\infty}\|Q(H_{1,n},\ldots,H_{m,n})\|
 \leq\|Q(s_1,\ldots,s_m)\|.
\]
Proposition~9.18(b) gives convergence of all joint normalized traces.  Apply
the latter conclusion to \((Q^*Q)^r\).  For every fixed \(r\geq1\),
\begin{align*}
 \liminf_{n\to\infty}\|Q(H_{1,n},\ldots,H_{m,n})\|
 &\geq \lim_{n\to\infty}
 \left\{\trn_n\!\left[(Q^*Q)^r
       (H_{1,n},\ldots,H_{m,n})\right]\right\}^{1/(2r)}\\
 &=\tau\!\left[(Q^*Q)^r(s_1,\ldots,s_m)\right]^{1/(2r)}.
\end{align*}
If \(a=Q(s_1,\ldots,s_m)^*Q(s_1,\ldots,s_m)\), faithfulness of \(\tau\)
implies
\[
 \lim_{r\to\infty}\tau(a^r)^{1/r}=\|a\|.
\]
Indeed, the spectral measure of \(a\) has support equal to \(\spn(a)\), so its
\(L^r\)-norms increase to the endpoint of that support.  Letting
\(r\to\infty\) proves the reverse norm inequality.

For a polynomial with coefficients in \(M_k(\mathbb C)\), use the same
linearization argument with coefficient matrices in \(M_{kq}(\mathbb C)\).
The hypotheses are available for every matrix size.  Proposition~9.18 then
gives the matrix-valued upper norm bound and normalized-trace convergence.
Repeating the preceding lower-bound argument with the faithful trace
\(\trn_k\otimes\tau\) gives the reverse matrix-valued norm inequality.
\end{proof}

\begin{theorem}[Bai--Yin norm bound]\label{thm:bai-yin-input}
If
\(U_n=n^{-1/2}(u_{ij})_{1\leq i,j\leq n}\) is formed from the upper-left
corners of one infinite i.i.d. real array with mean zero, variance \(v^2\),
and finite fourth moment, then Theorem~2.1 of Bai and Yin
\cite{bai-yin}, with one matrix factor, gives
\begin{equation}\label{eq:bai-yin-input}
 \limsup_{n\to\infty}\|U_n\|\leq2v
 \qquad\text{almost surely}.
\end{equation}
\end{theorem}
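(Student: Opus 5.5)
The plan is to read \eqref{eq:bai-yin-input} off the almost-sure norm bound for products of i.i.d. matrices in \cite{bai-yin}, specialized to a single factor. Beyond that specialization, the only work is to check that the present normalization and coupling match the hypotheses of that theorem. As a cross-check, I would also obtain it from the largest eigenvalue of a sample covariance matrix.

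\emph{Reduction to unit variance.} If \(v=0\), then \(u_{ij}=0\) almost surely, so \(U_n=0\) and there is nothing to prove. Otherwise, put \(\tilde u_{ij}=u_{ij}/v\). These variables form one infinite i.i.d. real array with mean zero, variance one and \(\E\tilde u_{11}^4=v^{-4}\E u_{11}^4<\infty\), and \(U_n=v\tilde U_n\). It therefore suffices to show \(\limsup_n\|\tilde U_n\|\le 2\) almost surely.

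\emph{Matching the hypotheses.} Theorem~2.1 of \cite{bai-yin} treats \(T_n=n^{-1/2}(x_{ij})\) built from the upper-left corners of a single doubly infinite i.i.d. array, with mean zero, unit variance and finite fourth moment. For each fixed \(k\) it asserts
\[
\limsup_{n\to\infty}\|T_n^k\|\le k+1
\]
almost surely. Our array is exactly of this kind: the nested-corner coupling of Definition~\ref{def:iid-entry-model} is the same coupling used there. Taking \(k=1\) gives \(\limsup_n\|\tilde U_n\|\le2\), and rescaling by \(v\) gives \eqref{eq:bai-yin-input}.

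\emph{Cross-check via sample covariance.} Alternatively, write \(\|U_n\|^2=\lambda_{\max}(U_nU_n^{\mathsf T})\). Here \(U_nU_n^{\mathsf T}=n^{-1}(u_{ij})(u_{ij})^{\mathsf T}\) is a sample covariance matrix with aspect ratio \(y=1\). Under the same coupling and a finite fourth moment, the Yin--Bai--Krishnaiah theorem gives
\[
\lambda_{\max}\to(1+\sqrt y)^2v^2=4v^2
\]
almost surely, which yields the bound again, and in fact with equality.

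\emph{Main point of care.} The step needing care is confirming that the cited almost-sure statement is formulated for the nested-corner coupling of one infinite array, and not merely in probability or for an arbitrary triangular array. This is what makes the later intersection of Bai--Yin events over all \(n\) in Section~\ref{sec:tail} legitimate. I would check this against the proof's truncation at level \(\delta\sqrt n\), which is carried out on a single array and uses Borel--Cantelli with the fourth moment. For the uses in Section~\ref{sec:tail}, this theorem is applied separately to the real and imaginary parts of fixed laws. Each such part is a real i.i.d. array with finite fourth moment, so no complex version is needed.
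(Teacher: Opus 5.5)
Your proposal is correct and matches the paper's argument: both specialize Theorem~2.1 of \cite{bai-yin}, which is stated for the nested corners of one infinite i.i.d.\ array with finite fourth moment, to a single factor (\(k=1\)), and your reduction to unit variance is harmless bookkeeping. The Yin--Bai--Krishnaiah sample-covariance cross-check is also valid, and with aspect ratio one it even gives \(\|U_n\|\to2v\), but it is not needed.
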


\section{The bounded-entry case}\label{sec:bounded}

Throughout this section, assume in addition that every entry law
\(\xi^{(\kappa)}\) is essentially bounded.  Set
\[
 S_{\kappa,n}=\frac{X_{\kappa,n}+X_{\kappa,n}^*}{\sqrt2},
 \qquad
 T_{\kappa,n}=\frac{X_{\kappa,n}-X_{\kappa,n}^*}{i\sqrt2}.
\]
For \(q\geq1\) and self-adjoint matrices
\(A_0,A_{\kappa,1},A_{\kappa,2}\in M_q(\mathbb C)\), define
\begin{equation}\label{eq:pencil-Ln}
 L_n=A_0\otimes I_n+
 \sum_{\kappa=1}^d
 \left(A_{\kappa,1}\otimes S_{\kappa,n}
      +A_{\kappa,2}\otimes T_{\kappa,n}\right).
\end{equation}
Let
\[
 B_\kappa=\frac{A_{\kappa,1}-iA_{\kappa,2}}{\sqrt2}.
\]
Then
\begin{equation}\label{eq:pencil-complex-form}
 L_n=A_0\otimes I_n+
 \sum_{\kappa=1}^d
 \left(B_\kappa\otimes X_{\kappa,n}
      +B_\kappa^*\otimes X_{\kappa,n}^*\right).
\end{equation}

\subsection{The pencil parameters}

\begin{lemma}[Entry-summand estimates]\label{lem:pencil-parameters}
For every fixed pencil \eqref{eq:pencil-Ln}, regarded as a
\(D_n\times D_n\) matrix with \(D_n=qn\),
\[
 R(L_n)=O(n^{-1/2}),
 \qquad
 \sigma(L_n)=O(1),
 \qquad
 \sigma_*(L_n)=O(n^{-1/2}).
\]
The constants may depend on the entry laws and the fixed pencil, but not on
\(n\).
\end{lemma}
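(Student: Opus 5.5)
We write the pencil \eqref{eq:pencil-complex-form} as a sum of independent entry summands. With \(E_{ij}\) the matrix units of \(M_n(\mathbb C)\), the \(H_0\) term is \(A_0\otimes I_n\), and each pair \((\kappa,i,j)\) gives
\[
 Z_{\kappa,ij}=\frac{1}{\sqrt n}\Bigl(\xi^{(\kappa)}_{ij}\,B_\kappa\otimes E_{ij}
 +\overline{\xi^{(\kappa)}_{ij}}\,B_\kappa^*\otimes E_{ji}\Bigr).
\]
Each \(Z_{\kappa,ij}\) is self-adjoint and centered, and the summands are independent over \((\kappa,i,j)\in\{1,\dots,d\}\times\{1,\dots,n\}^2\). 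This holds also for \(i=j\), where the summand is \(n^{-1/2}(\xi B_\kappa+\bar\xi B_\kappa^*)\otimes E_{ii}\). The three parameters are then computed summand by summand.

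\emph{The parameter \(R(L_n)\).} Since \(\|B\otimes E_{ij}\|=\|B\|\), the triangle inequality gives
\[
 \|Z_{\kappa,ij}\|\le 2n^{-1/2}\|\xi^{(\kappa)}\|_{L^\infty}\|B_\kappa\|.
\]
Taking the maximum over the finitely many colors yields \(R(L_n)=O(n^{-1/2})\).

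\emph{The parameter \(\sigma(L_n)\).} Expanding the square and using \(E_{ij}E_{ij}=\delta_{ij}E_{ij}\) and \(E_{ij}E_{ji}=E_{ii}\), we get for \(i\neq j\)
\[
 \E Z_{\kappa,ij}^2=n^{-1}\sigma_\kappa^2\bigl(B_\kappa B_\kappa^*\otimes E_{ii}+B_\kappa^*B_\kappa\otimes E_{jj}\bigr).
\]
The cross terms vanish because \(E_{ij}^2=0\). The pseudo-variance \(m_\kappa\) therefore enters only the \(n\) diagonal summands, through terms \(n^{-1}(m_\kappa B_\kappa^2+\bar m_\kappa B_\kappa^{*2})\otimes E_{ii}\). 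Summing over \(i,j\) gives
\[
 \sum\E Z^2=\sum_\kappa\sigma_\kappa^2(B_\kappa B_\kappa^*+B_\kappa^*B_\kappa)\otimes I_n+O(n^{-1}),
\]
where the \(O(n^{-1})\) term is a block-diagonal correction of norm at most \(\sum_\kappa 2|m_\kappa|\|B_\kappa\|^2/n\). Hence \(\sigma(L_n)^2\le\sum_\kappa 2\sigma_\kappa^2\|B_\kappa\|^2+O(n^{-1})=O(1)\). This is presumably the identity the paper later calls \eqref{eq:exact-variance}, which is why the pseudo-variance appears only at order \(n^{-1}\).

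\emph{The parameter \(\sigma_*(L_n)\).} This is the step needing the most care, since a naive bound gives only \(O(1)\). Fix unit vectors \(u,v\in\mathbb C^q\otimes\mathbb C^n\) and write them in blocks \(u=\sum_i u_i\otimes e_i\), \(v=\sum_j v_j\otimes e_j\) with \(u_i,v_j\in\mathbb C^q\). Then
\[
 \langle u,Z_{\kappa,ij}v\rangle=n^{-1/2}\bigl(\xi\langle u_i,B_\kappa v_j\rangle+\bar\xi\langle u_j,B_\kappa^*v_i\rangle\bigr).
\]
By \(|a+b|^2\le 2|a|^2+2|b|^2\),
\[
 \E|\langle u,Z_{\kappa,ij}v\rangle|^2\le 2n^{-1}\sigma_\kappa^2\|B_\kappa\|^2\bigl(\|u_i\|^2\|v_j\|^2+\|u_j\|^2\|v_i\|^2\bigr).
\]
Summing over \(i,j\) factorizes each term as \(\sum_i\|u_i\|^2\sum_j\|v_j\|^2=1\). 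Hence
\[
 \sigma_*(L_n)^2\le 4n^{-1}\sum_\kappa\sigma_\kappa^2\|B_\kappa\|^2,
\]
and so \(\sigma_*(L_n)=O(n^{-1/2})\). The point is that the weak variance sees each entry's coefficient only through one rank-one pairing of blocks, and these pairings sum to a product of norms; no independence between \(S_{\kappa,n}\) and \(T_{\kappa,n}\) is needed.

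All constants depend only on \(q\), \(\|B_\kappa\|\), \(\sigma_\kappa\), \(|m_\kappa|\) and \(\|\xi^{(\kappa)}\|_{L^\infty}\), not on \(n\), which proves Lemma~\ref{lem:pencil-parameters}.
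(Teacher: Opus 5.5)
Your proof is correct and follows the paper's argument essentially step for step. You use the same entry summands and the same triangle-inequality bound for \(R(L_n)\). Your matrix-unit computation reproduces the exact variance identity \eqref{eq:exact-variance}, with the pseudo-variance entering only through the diagonal summands at order \(n^{-1}\). For \(\sigma_*(L_n)\), you use the same block decomposition of \(u,v\) together with \(\E|\xi a+\overline{\xi}b|^2\le 2\E|\xi|^2(|a|^2+|b|^2)\), which gives the identical bound \(\sigma_*(L_n)^2\le 4n^{-1}\sum_\kappa\sigma_\kappa^2\|B_\kappa\|^2\).
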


\begin{proof}
Write \(E_{ij}\) for the matrix units in \(M_n(\mathbb C)\).  The random part
of \(L_n\) is the sum of the independent self-adjoint matrices
\begin{equation}\label{eq:entry-summand}
 Z_{\kappa ij}^{(n)}
 =\frac1{\sqrt n}\left(
 B_\kappa\otimes\xi^{(\kappa)}_{ij}E_{ij}
 +B_\kappa^*\otimes\overline{\xi^{(\kappa)}_{ij}}E_{ji}
 \right).
\end{equation}
If \(b=\max_\kappa\|\xi^{(\kappa)}\|_{L^\infty}\), then
\[
 \|Z_{\kappa ij}^{(n)}\|
 \leq\frac{2b\|B_\kappa\|}{\sqrt n},
\]
which proves the estimate for \(R(L_n)\).

Put \(m_\kappa=\E[(\xi^{(\kappa)})^2]\).  Multiplication of the two matrix
units in \eqref{eq:entry-summand} gives the exact identity
\begin{align}
 \sum_{\kappa,i,j}\E[(Z_{\kappa ij}^{(n)})^2]
 =\sum_{\kappa=1}^d\bigg[&
 \sigma_\kappa^2(B_\kappa B_\kappa^*+B_\kappa^*B_\kappa)
 \notag\\
 &+\frac1n\bigl(m_\kappa B_\kappa^2
       +\overline{m_\kappa}(B_\kappa^*)^2\bigr)\bigg]\otimes I_n.
\label{eq:exact-variance}
\end{align}
The right-hand side has bounded norm, so \(\sigma(L_n)=O(1)\).

For the weak variance estimate, decompose unit vectors in
\(\mathbb C^q\otimes\mathbb C^n\) as \(u=(u_i)_{i=1}^n\) and
\(v=(v_j)_{j=1}^n\), where \(u_i,v_j\in\mathbb C^q\).  We have
\[
 \sum_{i,j}|\langle u_i,B_\kappa v_j\rangle|^2
 \leq\|B_\kappa\|^2
 \left(\sum_i\|u_i\|^2\right)
 \left(\sum_j\|v_j\|^2\right)
 =\|B_\kappa\|^2,
\]
and the same estimate holds with \(B_\kappa^*\).  Since
\[
 \E|\xi a+\overline\xi b|^2
 \leq2\E|\xi|^2(|a|^2+|b|^2),
\]
independence and centering give
\[
 \sigma_*(L_n)^2
 \leq\frac4n\sum_{\kappa=1}^d
 \sigma_\kappa^2\|B_\kappa\|^2.
\]
This completes the proof.
\end{proof}

\subsection{The matching Gaussian pencil}

\begin{lemma}[Gaussian realization]\label{lem:gaussian-realization}
For each \(\kappa\), there is a centered complex Gaussian variable
\(g^{(\kappa)}\) such that
\[
 \E|g^{(\kappa)}|^2=\sigma_\kappa^2,
 \qquad
 \E[(g^{(\kappa)})^2]=m_\kappa.
\]
If \(Y_{\kappa,n}=n^{-1/2}(g^{(\kappa)}_{ij})\) are independent i.i.d.
Gaussian matrices, then the Gaussian comparison matrix for \(L_n\) has the
same law as
\begin{equation}\label{eq:Gaussian-pencil}
 G_n=A_0\otimes I_n+
 \sum_{\kappa=1}^d
 \left(B_\kappa\otimes Y_{\kappa,n}
      +B_\kappa^*\otimes Y_{\kappa,n}^*\right).
\end{equation}
\end{lemma}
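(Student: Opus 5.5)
The plan has two parts: first construct the scalar Gaussian \(g^{(\kappa)}\), then identify the Gaussian comparison matrix of \(L_n\) with \(G_n\).

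\emph{Step 1: the scalar Gaussian.} The covariance matrix of \((\operatorname{Re}\xi^{(\kappa)},\operatorname{Im}\xi^{(\kappa)})\) is the matrix
\[
 \Sigma_\kappa=\frac12\begin{pmatrix}\sigma_\kappa^2+\operatorname{Re}m_\kappa&\operatorname{Im}m_\kappa\\ \operatorname{Im}m_\kappa&\sigma_\kappa^2-\operatorname{Re}m_\kappa\end{pmatrix}
\]
displayed in Section~\ref{sec:background}. It is positive semidefinite, since \(|m_\kappa|\le\sigma_\kappa^2\) by Cauchy--Schwarz. Choose a real \(2\times2\) matrix \(M_\kappa\) with \(M_\kappa M_\kappa^{\mathsf T}=\Sigma_\kappa\), for instance \(\Sigma_\kappa^{1/2}\). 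Let \(\gamma=(\gamma_1,\gamma_2)\) be a standard real Gaussian vector, set \((a,b)^{\mathsf T}=M_\kappa\gamma\), and define \(g^{(\kappa)}=a+ib\).

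This vector \((a,b)\) is centered with covariance \(\Sigma_\kappa\), so
\[
 \E|g|^2=\E a^2+\E b^2=\sigma_\kappa^2,\qquad
 \E g^2=\E a^2-\E b^2+2i\E ab=\operatorname{Re}m_\kappa+i\operatorname{Im}m_\kappa=m_\kappa.
\]
This construction also exhibits \(g^{(\kappa)}\) as a complex linear combination of two independent real Gaussians, which the Introduction says is used later.

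\emph{Step 2: identifying the comparison matrix.} By definition, the Gaussian comparison matrix \(G\) for \(L_n\) is the self-adjoint Gaussian matrix with mean \(A_0\otimes I_n\) whose real entry covariance agrees with that of \(L_n\). A Gaussian law is determined by its mean and covariance, so it suffices to show that \(G_n\) is Gaussian with these two data.

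\emph{Gaussianity and mean.} Take the arrays \(g^{(\kappa)}_{ij}\) independent across \(\kappa,i,j\). Then \(G_n\) is a real-linear image of the jointly Gaussian family \((\operatorname{Re}g^{(\kappa)}_{ij},\operatorname{Im}g^{(\kappa)}_{ij})\), hence Gaussian. It is self-adjoint and has mean \(A_0\otimes I_n\).

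\emph{Covariance.} The random part of \(L_n\) is \(\sum_{\kappa,i,j}Z^{(n)}_{\kappa ij}\), with summands as in \eqref{eq:entry-summand}. The random part of \(G_n\) is the analogous sum \(\sum_{\kappa,i,j}W^{(n)}_{\kappa ij}\), where \(W^{(n)}_{\kappa ij}\) is obtained by replacing \(\xi^{(\kappa)}_{ij}\) with \(g^{(\kappa)}_{ij}\). Each \(Z^{(n)}_{\kappa ij}\) is a fixed real-linear function \(\Phi_{\kappa ij}\) of the real vector \((\operatorname{Re}\xi^{(\kappa)}_{ij},\operatorname{Im}\xi^{(\kappa)}_{ij})\), and \(W^{(n)}_{\kappa ij}=\Phi_{\kappa ij}(\operatorname{Re}g^{(\kappa)}_{ij},\operatorname{Im}g^{(\kappa)}_{ij})\) with the same map.

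Because the summands are independent and centered, the covariance of all real entries of the random part is the sum over \((\kappa,i,j)\) of the pushforward of \(\Sigma_\kappa\) under \(\Phi_{\kappa ij}\). This holds for both \(L_n\) and \(G_n\), since the input vectors have the same covariance \(\Sigma_\kappa\). The two entry covariances therefore coincide, and \(G\) has the same law as \(G_n\).

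\emph{Main obstacle.} The only point needing care is conceptual rather than computational. The comparison matrix is defined through the covariance of the \emph{real} entries, and the pseudo-variance \(m_\kappa\) is part of that data. It would therefore be wrong to match only \(\E|g|^2\) by taking a circularly symmetric \(g\). Matching the full \(2\times2\) real covariance \(\Sigma_\kappa\), which Step~1 does, is exactly what makes the pushforward argument in Step~2 go through.
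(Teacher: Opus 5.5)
Your proposal is correct and follows essentially the same route as the paper. Both match the full real \(2\times2\) covariance \eqref{eq:real-covariance}, which is positive semidefinite because \(|m_\kappa|\le\sigma_\kappa^2\); both observe that the covariance of each independent centered entry summand depends only on this data, and both conclude by uniqueness of a Gaussian law with given mean and covariance. Your explicit square-root construction of \(g^{(\kappa)}\) is the same representation the paper uses later in the proof of Proposition~\ref{prop:gaussian-limit}.
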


\begin{proof}
The covariance matrix required for the real and imaginary parts of
\(g^{(\kappa)}\) is
\begin{equation}\label{eq:real-covariance}
 \frac12
 \begin{pmatrix}
  \sigma_\kappa^2+\operatorname{Re}m_\kappa&
  \operatorname{Im}m_\kappa\\
  \operatorname{Im}m_\kappa&
  \sigma_\kappa^2-\operatorname{Re}m_\kappa
 \end{pmatrix}.
\end{equation}
It is positive semidefinite: its trace is \(\sigma_\kappa^2\), and its
determinant is
\(\tfrac14(\sigma_\kappa^4-|m_\kappa|^2)\geq0\), because
\(|m_\kappa|\leq\E|\xi^{(\kappa)}|^2=\sigma_\kappa^2\).
Thus the required Gaussian variable exists, including when the covariance is
degenerate.

Every covariance of the summands \eqref{eq:entry-summand} is a linear
combination of \(\E|\xi^{(\kappa)}|^2\),
\(\E[(\xi^{(\kappa)})^2]\), and its complex conjugate.  Replacing each entry
by \(g^{(\kappa)}\) therefore preserves the full real covariance of the
self-adjoint matrix \(L_n\).  Independence of entries and colors is also
preserved, so the uniqueness in law of a Gaussian vector proves
\eqref{eq:Gaussian-pencil}.
\end{proof}

\begin{theorem}[Anderson's strong Wigner theorem]
\label{thm:anderson-input}
For each \(1\leq j\leq m\), let \(W_{j,n}\) be \(n^{-1/2}\) times the
upper-left \(n\times n\) corner of an infinite Hermitian Wigner array
satisfying the hypotheses of
\cite[Theorems~1 and~2 and Corollary~1]{anderson-2013-polynomial}: its
off-diagonal entries are centered with unit second absolute moment and finite
fourth moment, its diagonal entries are centered real variables with finite
fourth moment, and the real and imaginary parts of each off-diagonal entry
are independent.  Assume that the arrays are independent.  Then
\((W_{1,n},\ldots,W_{m,n})\) converges almost surely strongly in
\(*\)-distribution to a free standard semicircular family.  The conclusion
also holds for polynomials with fixed matrix coefficients.
\end{theorem}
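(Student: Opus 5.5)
The statement above is an imported result, so the plan is not to reprove Anderson's theorem. Instead I would check that its hypotheses are exactly those of \cite[Theorems~1 and~2 and Corollary~1]{anderson-2013-polynomial}, and then assemble his conclusions into the form used in Proposition~\ref{prop:pencil-criterion}. Anderson works with \(n^{-1/2}\)-normalized upper-left corners of infinite Hermitian Wigner arrays. His assumptions are:
\begin{itemize}
\item off-diagonal entries centered, with unit second absolute moment, finite fourth moment, and independent real and imaginary parts;
\item diagonal entries centered, real, with finite fourth moment.
\end{itemize}
I would check that this is precisely the list stated. In particular, the normalization of the diagonal entries does not affect the limit, since the diagonal is a perturbation of operator norm \(o(1)\) almost surely under a fourth moment. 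I would also check that the nested-corner coupling is the same one he uses, so that his almost-sure events refer to the same sequence.

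Second, I would extract the two hypotheses of Proposition~\ref{prop:pencil-criterion} for \(H_{j,n}=W_{j,n}\) and \(s_j\) free standard semicircular.
\begin{itemize}
\item Hypothesis~(1), spectral inclusion for every self-adjoint pencil \(A_0\otimes I_n+\sum_j A_j\otimes W_{j,n}\) with \(A_j\in M_q(\mathbb C)_{\rm sa}\), is the content of Anderson's Theorem~1. He states it for self-adjoint polynomials with matrix coefficients, of which linear pencils are a special case.
\item Hypothesis~(2), almost-sure convergence of \(\trn_{qn}\) of even powers of the pencils, follows from almost-sure asymptotic freeness of independent Wigner matrices. This is recorded in Anderson's Theorem~2/Corollary~1. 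Alternatively, it follows from the classical moment method with a variance bound of order \(n^{-2}\) and Borel--Cantelli, applied after truncation under the fourth moment.
\end{itemize}
Expanding \((A_0\otimes1+\sum A_j\otimes W_{j,n})^{2r}\) expresses these traces as finite linear combinations of mixed moments \(\trn_n(W_{j_1,n}\cdots W_{j_p,n})\) with coefficients \(\trn_q\) of products of the \(A\)'s, so scalar moment convergence suffices.

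The step I expect to require the most care is producing \emph{one} probability-one event that works simultaneously for every \(q\) and every choice of coefficient matrices, since Anderson's statements are for fixed data.
\begin{itemize}
\item First, intersect the countably many events indexed by \(q\), by \(r\), and by coefficient tuples with entries in \(\mathbb Q+i\mathbb Q\). Intersect also with the event \(\|W_{j,n}\|\to2\), which is the one-matrix case of the theorem.
\item On that event, the pencils are uniformly bounded in \(n\) and Lipschitz in the coefficients, with constant controlled by \(\sup_n\max_j\|W_{j,n}\|\). The limiting pencils are Lipschitz in the coefficients with constant \(\max_j\|s_j\|=2\).
\item Hence spectral inclusion up to \(\epsilon\), and convergence of each fixed even moment, extend by approximation from rational to arbitrary self-adjoint coefficients.
\end{itemize}
With both hypotheses holding on one event, Proposition~\ref{prop:pencil-criterion} gives almost-sure strong convergence in \(*\)-distribution to the free standard semicircular family. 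This includes norm and normalized-trace convergence for polynomials with fixed matrix coefficients, with the reverse norm inequality supplied by faithfulness as in its proof.
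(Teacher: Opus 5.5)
Your proposal is correct, and at bottom it does what the paper does. The paper gives no proof of Theorem~\ref{thm:anderson-input}: it imports the full conclusion directly from Theorems~1 and~2 and Corollary~1 of \cite{anderson-2013-polynomial}.

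Where you differ is in how the cited inputs are assembled.
\begin{itemize}
\item You take only two inputs: spectral inclusion for self-adjoint pencils, and scalar almost-sure mixed-moment convergence.
\item You then let Proposition~\ref{prop:pencil-criterion}, that is, Proposition~9.18 of \cite{brailovskaya-van-handel-2024}, produce the norm convergence and the matrix-coefficient statements, rather than quoting Anderson's norm conclusion directly.
\end{itemize}
This is not circular, since the criterion does not depend on Anderson.

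What your route buys is something the bare citation leaves implicit. You get a single probability-one event valid simultaneously for every $q$, every coefficient tuple and every polynomial. It comes from your countable intersection over rational pencils, together with the Lipschitz approximation using $\sup_n\max_j\|W_{j,n}\|<\infty$. This is exactly the form the paper later relies on when it intersects with ``the one event on which the Gaussian family converges strongly''. The cost is an extra dependence on the Brailovskaya--van Handel criterion for a conclusion Anderson already supplies.

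Two small remarks.
\begin{itemize}
\item The boundedness you need already follows from the rational coordinate pencils with $q=1$, as in the proof of Proposition~\ref{prop:bounded-pencil-limits}. The separate appeal to $\|W_{j,n}\|\to2$ is therefore harmless but unnecessary.
\item You should pin down which of Anderson's numbered results actually states the almost-sure trace convergence. If it is not stated there, your fallback supplies it: fixed-level truncation, the $O(n^{-2})$ variance bound with Borel--Cantelli for the bounded part, and a Bai--Yin tail-norm bound as in Section~\ref{sec:tail}.
\end{itemize}
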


\begin{lemma}[Spectra under strong convergence]\label{lem:strong-spectrum}
Let \(A_n=A_n^*\) converge strongly in distribution to a self-adjoint element
\(a\) in a tracial \(C^*\)-probability space with faithful trace.  Then
\[
 d_{\rm H}(\spn(A_n),\spn(a))\longrightarrow0.
\]
\end{lemma}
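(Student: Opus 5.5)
The plan is to prove the two halves of the Hausdorff convergence separately, using only norm convergence of polynomials in \(A_n\) together with faithfulness of \(\tau\). By strong convergence, \(\|A_n\|\to\|a\|\). Hence all spectra \(\spn(A_n)\) eventually lie in a fixed compact interval \(J=[-M,M]\) with \(M>\|a\|+1\). Weierstrass approximation then upgrades polynomial norm convergence on \(J\) to continuous functional calculus: for every continuous \(f\) on \(J\), \(\|f(A_n)\|\to\|f(a)\|\). This uses the uniform estimate \(\|f(A_n)-p(A_n)\|\le\sup_J|f-p|\), valid once \(\spn(A_n)\subseteq J\), and the same estimate for \(a\).

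\emph{Upper inclusion.} Fix \(\epsilon>0\) and choose a continuous \(f\ge0\) on \(J\) that vanishes on the \(\epsilon/2\)-neighbourhood of \(\spn(a)\) and equals \(1\) outside the \(\epsilon\)-neighbourhood. Then \(f(a)=0\), so \(\|f(A_n)\|\to0\). If some eigenvalue \(\lambda\) of \(A_n\) satisfied \(\operatorname{dist}(\lambda,\spn(a))\ge\epsilon\), we would have \(\|f(A_n)\|\ge f(\lambda)=1\). Hence \(\sup_{x\in\spn(A_n)}\operatorname{dist}(x,\spn(a))<\epsilon\) for all large \(n\).

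\emph{Lower approximation.} This direction is the one requiring care, since it must be uniform over \(\spn(a)\). Cover the compact set \(\spn(a)\) by finitely many points \(x_1,\dots,x_N\) forming an \(\epsilon\)-net. For each \(k\), take a continuous bump \(h_k\) on \(J\) with \(0\le h_k\le 1\), \(h_k(x_k)=1\), and support in \((x_k-\epsilon,x_k+\epsilon)\). Then \(\|h_k(a)\|=\sup_{\spn(a)}|h_k|=1\), so \(\|h_k(A_n)\|\to1\). For large \(n\) this norm is nonzero, which forces \(A_n\) to have an eigenvalue in \((x_k-\epsilon,x_k+\epsilon)\). Taking \(n\) large for all \(k\) simultaneously, every \(y\in\spn(a)\) lies within \(2\epsilon\) of \(\spn(A_n)\).

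Combining the two halves gives \(d_{\rm H}(\spn(A_n),\spn(a))\le2\epsilon\) eventually, and since \(\epsilon\) was arbitrary the claim follows. The only subtle point is the identification \(\|h(a)\|=\sup_{\spn(a)}|h|\). This is the continuous functional calculus in the \(C^*\)-algebra itself, so faithfulness of \(\tau\) is not even needed for this step. Faithfulness is needed only if strong convergence is phrased through trace-defined norms, and in that case the preliminaries guarantee that the support of \(\mu_a\) equals \(\spn(a)\).
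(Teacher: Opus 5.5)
Your proof is correct. It differs from the paper's proof in the lower inclusion; the upper inclusion is the same argument. The paper approximates the cutoff \(f\) by a polynomial \(p\) with error less than \(1/4\) directly, instead of first extending norm convergence to continuous functions, but that is only packaging.

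For the lower inclusion, the paper fixes \(\lambda\in\spn(a)\) and takes a bump \(h\) at \(\lambda\). It uses faithfulness to get \(\tau(h(a))>0\), and then \emph{trace} convergence \(\trn_n h(A_n)\to\tau(h(a))\) to conclude \(h(A_n)\neq0\). You instead use the isometry of the continuous functional calculus, \(\|h_k(a)\|=\sup_{\spn(a)}|h_k|=1\), together with \emph{norm} convergence. This step needs your net points \(x_k\) to lie in \(\spn(a)\), which you assume implicitly and can always arrange.

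Your route shows that norm convergence of the one-variable polynomials \(p(A_n)\) alone forces Hausdorff convergence of the spectra. Neither trace convergence nor faithfulness is needed, so the lemma holds with the limit in an arbitrary unital \(C^*\)-algebra. The paper's route uses more of the hypothesis but gives a stronger intermediate conclusion. Since \(\trn_n h(A_n)\) stays bounded away from zero, at least a fixed positive fraction of the eigenvalues of \(A_n\), not merely one, lies within \(\epsilon\) of each point of \(\spn(a)\).
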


\begin{proof}
Norm convergence of the coordinate polynomial makes the spectra uniformly
bounded, say in a fixed interval \([-M,M]\).  Fix \(\epsilon>0\).

For the upper inclusion, choose a continuous function \(f:[-M,M]\to[0,1]\)
that vanishes on \(\spn(a)\) and equals one at every point whose distance from
\(\spn(a)\) is at least \(\epsilon\).  Approximate \(f\) uniformly by a real
polynomial \(p\) with error less than \(1/4\).  Then
\(\|p(a)\|<1/4\), and strong convergence gives \(\|p(A_n)\|<1/2\) for all
large \(n\).  If \(\spn(A_n)\) contained a point at distance at least
\(\epsilon\) from \(\spn(a)\), the spectral mapping theorem would instead
give \(\|p(A_n)\|\geq3/4\), a contradiction.

For the lower inclusion, fix \(\lambda\in\spn(a)\) and choose a nonnegative
continuous function \(h\) supported in
\((\lambda-\epsilon,\lambda+\epsilon)\) with \(h(\lambda)>0\).  Functional
calculus gives \(h(a)\neq0\); faithfulness gives \(\tau(h(a))>0\).  Uniform
polynomial approximation, moment convergence, and the uniform spectral bound
imply
\(\trn_n h(A_n)\to\tau(h(a))\).  Hence \(h(A_n)\neq0\) for all large \(n\),
so \(\spn(A_n)\) meets
\((\lambda-\epsilon,\lambda+\epsilon)\).  Compactness of \(\spn(a)\)
reduces this to finitely many \(\lambda\)'s and completes the proof.
\end{proof}

\begin{proposition}[Gaussian limit]\label{prop:gaussian-limit}
The Gaussian matrices from Lemma~\ref{lem:gaussian-realization} converge
almost surely strongly in \(*\)-distribution.  Their limit
\((z_1,\ldots,z_d)\) is a free circular family satisfying
\(\tau(z_\kappa z_\kappa^*)=\sigma_\kappa^2\).
Consequently, for every fixed self-adjoint pencil,
\begin{equation}\label{eq:Gaussian-spectrum-limit}
 d_{\rm H}(\spn(G_n),\spn(G_\infty))\longrightarrow0
 \qquad\text{almost surely},
\end{equation}
where
\[
 G_\infty=A_0\otimes\one+
 \sum_{\kappa=1}^d
 (B_\kappa\otimes z_\kappa+B_\kappa^*\otimes z_\kappa^*).
\]
\end{proposition}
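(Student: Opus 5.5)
We would prove Proposition~\ref{prop:gaussian-limit} by reducing the Gaussian matrices to independent Wigner matrices, following the route outlined in the Introduction, and then invoking Theorem~\ref{thm:anderson-input}.

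\emph{Step 1: real Gaussian representation.} By Lemma~\ref{lem:gaussian-realization}, the vector \((\operatorname{Re}g^{(\kappa)},\operatorname{Im}g^{(\kappa)})\) has covariance \eqref{eq:real-covariance}. Factor this covariance as \(MM^{\mathsf T}\) with \(M\in M_2(\mathbb R)\). Then \(g^{(\kappa)}\) has the same law as \(\alpha_\kappa\eta_1+\beta_\kappa\eta_2\), where \(\eta_1,\eta_2\) are independent real \(N(0,1)\) variables and \(\alpha_\kappa,\beta_\kappa\in\mathbb C\) are read off from the rows of \(M\). These coefficients satisfy
\[
|\alpha_\kappa|^2+|\beta_\kappa|^2=\sigma_\kappa^2,\qquad \alpha_\kappa^2+\beta_\kappa^2=m_\kappa .
\]
Consequently \(Y_{\kappa,n}\) has the same law as \(\alpha_\kappa R_{\kappa,1,n}+\beta_\kappa R_{\kappa,2,n}\), where the \(R\)'s are independent real Ginibre matrices of variance \(n^{-1}\). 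Since the proposition is a statement about the Gaussian law, we may construct this coupling directly, using nested corners of infinite arrays.

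\emph{Step 2: Wigner decomposition.} Write each real Ginibre matrix as \(R=(W_1+iW_2)/\sqrt2\), where
\[
W_1=\frac{R+R^{\mathsf T}}{\sqrt2},\qquad W_2=\frac{R-R^{\mathsf T}}{i\sqrt2}.
\]
\(W_1\) is a real symmetric Wigner matrix. \(W_2\) is Hermitian with purely imaginary off-diagonal entries, whose real and imaginary parts are independent (the real part is \(0\)), and its diagonal is \(0\). The entries of \(W_1\) and \(W_2\) are jointly Gaussian and uncorrelated: for \(i<j\),
\[
\E[(R_{ij}+R_{ji})(R_{ij}-R_{ji})]=0,
\]
so they are independent. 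Both have off-diagonal second absolute moment \(n^{-1}\) after normalization, and all moments are finite. Hence the full family of \(4d\) matrices consists of independent Wigner arrays satisfying the hypotheses of Theorem~\ref{thm:anderson-input}. The degenerate diagonals (variance \(0\) or \(2/n\)) are allowed there. This verification of hypotheses, and in particular confirming that Anderson's conditions tolerate the zero real part and the nonstandard diagonal variance, is the step I expect to require the most care. Anderson's theorem then gives almost sure strong convergence of \((W_{\kappa,\ell,n})\) to a free standard semicircular family \((s_{\kappa,\ell})\), including for polynomials with matrix coefficients.

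\emph{Step 3: identifying the limit.} Every \(*\)-polynomial in the \(Y_{\kappa,n}\) is a fixed \(*\)-polynomial in the \(W\)'s. The tuple therefore converges almost surely strongly to
\[
z_\kappa=\alpha_\kappa r_{\kappa,1}+\beta_\kappa r_{\kappa,2},\qquad r_{\kappa,\ell}=\frac{s_{\kappa,\ell,1}+is_{\kappa,\ell,2}}{\sqrt2}.
\]
Each \(r_{\kappa,\ell}\) is a standard circular element, and the \(r\)'s are \(*\)-free. The family \(\{z_\kappa\}\) lies in the free semicircular system, so it has vanishing free cumulants of order \(\neq2\); it is therefore a centered \(*\)-free semicircular system described by its second-order cumulants. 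Compute these:
\[
\tau(z_\kappa z_\kappa)=\alpha_\kappa^2\tau(r^2)+\beta_\kappa^2\tau(r^2)=0,\qquad \tau(z_\kappa z_\kappa^*)=|\alpha_\kappa|^2+|\beta_\kappa|^2=\sigma_\kappa^2,
\]
since \(\tau(r^2)=0\) for a circular \(r\). All cross covariances between distinct colors vanish by freeness. This is where the pseudo-variance \(m_\kappa\) drops out. A centered \(*\)-Gaussian (free) family with these covariances has the same \(*\)-distribution as a free circular family with variances \(\sigma_\kappa^2\). Because norms in the \(C^*\)-algebra generated under a faithful trace are determined by the \(*\)-distribution, the limit can be taken to be such a family in any faithful tracial space.

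\emph{Step 4: spectral consequence.} For a fixed pencil, \(G_n\) is a self-adjoint polynomial with matrix coefficients in the \(Y\)'s. Matrix-coefficient strong convergence gives norm and trace convergence of all polynomials in \(G_n\). Lemma~\ref{lem:strong-spectrum}, applied with the faithful trace \(\trn_q\otimes\tau\), then yields \eqref{eq:Gaussian-spectrum-limit}.
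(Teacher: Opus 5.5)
Your proposal is correct and follows essentially the same route as the paper: you write \(g^{(\kappa)}\) as a complex combination of two independent real Gaussians, split each real Ginibre matrix \(R\) into the independent Wigner matrices \((R+R^{\mathsf T})/\sqrt2\) and \((R-R^{\mathsf T})/(i\sqrt2)\), apply Anderson's theorem jointly, identify the limit as circular through its second-order free cumulants (where the pseudo-variance drops out), and conclude with Lemma~\ref{lem:strong-spectrum} for the faithful trace \(\trn_q\otimes\tau\). Two cosmetic points: with \((\operatorname{Re}g,\operatorname{Im}g)^{\mathsf T}=M\eta\), the coefficients \(\alpha_\kappa,\beta_\kappa\) are read off from the columns of \(M\), not the rows (or take \(M\) to be the symmetric square root, where the two agree); and in \(\tau(z_\kappa z_\kappa)\) the cross term \(2\alpha_\kappa\beta_\kappa\tau(r_{\kappa,1}r_{\kappa,2})\) should be written out, although it vanishes by freeness.
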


\begin{proof}
Take independent standard real Gaussian variables \(r^{(\kappa)}\) and
\(s^{(\kappa)}\).  A square root of \eqref{eq:real-covariance} yields complex
numbers \(a_\kappa,b_\kappa\) such that
\[
 g^{(\kappa)}=a_\kappa r^{(\kappa)}+b_\kappa s^{(\kappa)},
 \qquad
 |a_\kappa|^2+|b_\kappa|^2=\sigma_\kappa^2,
 \qquad
 a_\kappa^2+b_\kappa^2=m_\kappa.
\]
Accordingly,
\[
 Y_{\kappa,n}=a_\kappa R_{\kappa,n}+b_\kappa Q_{\kappa,n},
\]
where all \(R_{\kappa,n},Q_{\kappa,n}\) are independent real Ginibre matrices
with entries \(N(0,n^{-1})\).

For one real Ginibre matrix \(R_n\), set
\[
 H_n=\frac{R_n+R_n^*}{\sqrt2},
 \qquad
 K_n=\frac{R_n-R_n^*}{i\sqrt2}.
\]
For \(i<j\), the pair
\[
 \frac{R_n(i,j)+R_n(j,i)}{\sqrt2},
 \qquad
 \frac{R_n(i,j)-R_n(j,i)}{\sqrt2}
\]
is an orthogonal transform of two independent real Gaussians.  Thus \(H_n\)
and \(K_n\) are independent Wigner matrices.  Their off-diagonal entries have
variance \(n^{-1}\); the diagonal variances are \(2n^{-1}\) and \(0\),
respectively.  Purely real, purely imaginary, and zero components are allowed
by Anderson's assumptions.  Since
\[
 R_n=\frac{H_n+iK_n}{\sqrt2},
\]
Theorem~\ref{thm:anderson-input}, applied jointly to all the Wigner matrices
just constructed, gives strong convergence to a free semicircular family.
Consequently, the Ginibre matrices converge strongly to a free circular
family.  Denote the two circular limits associated with
\(R_{\kappa,n}\) and \(Q_{\kappa,n}\) by
\(c_{\kappa,1}\) and \(c_{\kappa,2}\), each of variance one.  Then
\[
 z_\kappa=a_\kappa c_{\kappa,1}+b_\kappa c_{\kappa,2}.
\]

The circular variables are free Gaussian in the free-probability sense.
Multilinearity of free cumulants and freeness give
\begin{align*}
 \kappa_2(z_\kappa,z_\kappa)
 &=\kappa_2(z_\kappa^*,z_\kappa^*)=0,\\
 \kappa_2(z_\kappa,z_\kappa^*)
 &=\kappa_2(z_\kappa^*,z_\kappa)
 =|a_\kappa|^2+|b_\kappa|^2=\sigma_\kappa^2,
\end{align*}
and all cumulants of order other than two vanish.  Hence \(z_\kappa\) is
circular with the required variance.  Different colors are free because they
belong to disjoint free subfamilies.  Finally,
Lemma~\ref{lem:strong-spectrum}, applied to the matrix-valued pencil, proves
\eqref{eq:Gaussian-spectrum-limit}.
\end{proof}

\subsection{Gaussian trace integrability}

The moment comparison \eqref{eq:bvh-moment} involves expectations.  We next
justify passage from Anderson's almost-sure Gaussian trace limit to the
corresponding expected limit.

\begin{lemma}[Wick index count for products of traces]
\label{lem:Wick-index-count}
Fix \(h\geq1\) and \(p_1,\ldots,p_h\geq0\).  Expand
\[
 \prod_{a=1}^h\trn_{qn}G_n^{p_a}
\]
using the independent real Ginibre representation from the proof of
Proposition~\ref{prop:gaussian-limit}.  Consider a monomial containing
\(2e\) centered Gaussian matrix entries and a Wick pairing whose covariance
factors are nonzero.  After all covariance identifications, let \(v\) be the
number of free \(n\)-indices.  Then
\begin{equation}\label{eq:Wick-index-count}
 v\leq e+h.
\end{equation}
Consequently, the contribution of every fixed nonzero pairing to the product
of normalized traces is \(O(1)\), uniformly in \(n\).
\end{lemma}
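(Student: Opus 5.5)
We will prove \eqref{eq:Wick-index-count} by encoding each Wick pairing as a graph and counting index classes through an Euler-characteristic bound.

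\textbf{Setup.} Expand each factor \(\trn_{qn}G_n^{p_a}\) as \((qn)^{-1}\) times a sum over closed index cycles. The \(q\)-indices range over a fixed finite set, so they contribute only bounded constants and can be ignored. A cycle of length \(p_a\) carries \(p_a\) \(n\)-indices \(i_1,\dots,i_{p_a}\), cyclically. Each step of the cycle is one of two kinds: a deterministic factor \(A_0\otimes I_n\), which forces \(i_t=i_{t+1}\), or a Gaussian factor. A Gaussian factor is an entry \(R_{\kappa,n}(i_t,i_{t+1})\) or \(R_{\kappa,n}(i_{t+1},i_t)\), with the same for \(Q_{\kappa,n}\), each carrying a coefficient of size \(n^{-1/2}\) after normalization.

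First collapse the deterministic steps by identifying their endpoints. What remains is \(h\) cycles whose edges are the \(2e\) Gaussian entries. Isolated cycles with no Gaussian edges contribute at most one free index each.

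\textbf{Graph construction.} A nonzero Wick covariance \(\E[R(i,j)R(k,l)]\) requires the two factors to be the same real Ginibre matrix and \(\{i=k,\ j=l\}\). Since Ginibre entries are not symmetric, this is an oriented identification. The orientation may be reversed relative to the cycle direction when one factor comes from an adjoint, but in every case the pairing glues two edges and identifies their endpoints in pairs.

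This gives a graph \(\Gamma\):
\begin{itemize}
\item its vertices are the \(v\) free index classes;
\item its edges are the \(e\) paired edge classes;
\item it has at most \(h\) connected components, because each trace cycle is connected.
\end{itemize}
Then
\[
v-e\le \#\text{components}\le h,
\]
which is \eqref{eq:Wick-index-count}. One degenerate case needs a check: a cycle with no Gaussian edge yields one vertex, no edges, and one component, which is consistent with the bound.

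\textbf{Consequence.} For a fixed pairing, the contribution is bounded by
\[
C\, (qn)^{-h}\, n^{-e}\, n^{v}.
\]
Here \(C\) comes from the norms of the \(A\)'s and \(B\)'s and the covariances. The factor \(n^{-e}\) comes from \((n^{-1/2})^{2e}\) together with the unit-variance normalization, and \(n^{v}\) counts the free indices. By \eqref{eq:Wick-index-count} this is \(O(1)\) uniformly in \(n\).

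The number of monomials and pairings is finite and depends only on \(p_a,h,d,q\), so the entire expectation of the product of traces is \(O(1)\).

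\textbf{Main obstacle.} The step needing care is the claim that the connected-component count is at most \(h\) after gluing. This is immediate, since gluing only merges components, and the graph before gluing is exactly the \(h\) cycles. The orientation issues for adjoint factors do not affect this count. We therefore expect the argument to be routine bookkeeping, and in particular we never need a genus or planarity refinement, because only the crude bound \(v\le e+h\) is required.
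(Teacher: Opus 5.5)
Your proposal is correct and follows essentially the same route as the paper: you collapse the deterministic $A_0\otimes I_n$ stretches, treat each nonzero covariance as gluing two oriented Gaussian edges, observe that the quotient graph has $e$ edge classes and at most $h$ components (since gluing can only merge the $h$ trace cycles), and deduce $v\le e+h$, which is the paper's per-component bound $v_j\le e_j+1$ summed over components. Your power count $(qn)^{-h}n^{-e}n^{v}=O(1)$ for the consequence is also the same as the paper's $n^{v-e-h}\le 1$.
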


\begin{proof}
Write out the block indices in \(\{1,\ldots,q\}\) and the matrix indices
in \(\{1,\ldots,n\}\).  The block-index sums have a fixed finite size, so only
the latter indices affect the power of \(n\).  In each cyclic trace, a factor
\(A_0\otimes I_n\) equates its two adjacent \(n\)-indices; contract every such
deterministic stretch.  What remains from each trace is either one isolated
vertex, if the monomial contains no random factor in that trace, or a
connected cyclic multigraph whose edges are the Gaussian occurrences.

A nonzero covariance pairs two occurrences from the same underlying real
Gaussian array.  It identifies their two ordered endpoints, with the
orientation determined by whether the corresponding matrix occurrence was
transposed.  Taking the quotient by all these identifications can merge
components but cannot create new ones.  The quotient graph therefore has
\(e\) random edge classes and at most \(h\) connected components.  If its
components have \(v_j\) vertices and \(e_j\) edge classes, then
\(v_j\leq e_j+1\); this also covers an isolated component, for which
\(e_j=0\) and \(v_j=1\).  Hence
\[
 v=\sum_jv_j\leq\sum_j(e_j+1)=e+\#\{\text{components}\}\leq e+h,
\]
which proves \eqref{eq:Wick-index-count}.

Each covariance is \(O(n^{-1})\), so the \(e\) pairs contribute
\(O(n^{-e})\).  The index summation contributes \(n^v\), while the \(h\)
normalized traces contribute \(n^{-h}\) up to the fixed factor \(q^{-h}\).
Thus the total power is \(n^{v-e-h}\leq1\).  All remaining coefficient and
block-index factors are independent of \(n\).
\end{proof}

\begin{lemma}[Uniform \(L^2\) bound for Gaussian pencil traces]
\label{lem:Gaussian-L2}
For every fixed self-adjoint Gaussian pencil \eqref{eq:Gaussian-pencil} and
every fixed \(r\geq1\),
\[
 \sup_{n\geq1}
 \E\left|\trn_{qn}G_n^{2r}\right|^2<\infty.
\]
Consequently,
\begin{equation}\label{eq:Gaussian-expected-moment}
 \E\trn_{qn}G_n^{2r}
 \longrightarrow
 (\trn_q\otimes\tau)(G_\infty^{2r}).
\end{equation}
\end{lemma}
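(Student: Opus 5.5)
The plan is to write the squared trace as a product of two traces and apply Lemma~\ref{lem:Wick-index-count} with \(h=2\), \(p_1=p_2=2r\).

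\textbf{Expansion.} Using the real Ginibre representation \(Y_{\kappa,n}=a_\kappa R_{\kappa,n}+b_\kappa Q_{\kappa,n}\), expand each factor \(G_n^{2r}\) multilinearly. Each factor is a sum over words of length \(2r\) in the letters \(A_0\otimes I_n\), \(B_\kappa\otimes R_{\kappa,n}\), \(B_\kappa\otimes R_{\kappa,n}^*\), and the analogous \(Q\)-terms, with coefficients \(a_\kappa,b_\kappa\) and their conjugates. Since \(G_n\) is self-adjoint, \(\trn_{qn}G_n^{2r}\) is real, so \(|\trn_{qn}G_n^{2r}|^2=\trn_{qn}G_n^{2r}\cdot\trn_{qn}G_n^{2r}\). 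Its expectation is therefore a finite sum, over pairs of words, of expectations of products of at most \(4r\) centered jointly Gaussian entries.

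\textbf{Bound.} Apply Wick's formula to each monomial: odd counts vanish, and an even count \(2e\) gives a sum over pair partitions. The number of word pairs and the number of pair partitions depend only on \(r\), \(d\), and the fixed pencil, not on \(n\). By Lemma~\ref{lem:Wick-index-count}, each nonzero pairing contributes \(O(1)\) uniformly in \(n\). The contribution also carries bounded factors from the coefficients \(\|A_0\|,\|B_\kappa\|,|a_\kappa|,|b_\kappa|\) and from the \(q\)-block index sums. Summing finitely many \(O(1)\) terms gives \(\sup_n\E|\trn_{qn}G_n^{2r}|^2<\infty\).

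\textbf{Convergence of expectations.} Proposition~\ref{prop:gaussian-limit} gives almost sure strong, hence moment, convergence of the Gaussian matrices with matrix coefficients. Since \(G_n^{2r}\) is a fixed matrix-coefficient polynomial in \(Y_n,Y_n^*\), this yields \(\trn_{qn}G_n^{2r}\to(\trn_q\otimes\tau)(G_\infty^{2r})\) almost surely. The uniform \(L^2\) bound makes the family \(\{\trn_{qn}G_n^{2r}\}_n\) uniformly integrable. Vitali's theorem then upgrades almost sure convergence to \(L^1\) convergence, which gives \eqref{eq:Gaussian-expected-moment}.

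\textbf{Main obstacle.} The only delicate point is confirming that Lemma~\ref{lem:Wick-index-count} genuinely covers a product of two traces. The pairings may connect the two cycles, and the bound \(v\le e+h\) with \(h=2\) handles exactly this. One also has to check that the entries of \(Y^*\) (transposed occurrences) are correctly oriented in the identifications. That lemma already accounts for this orientation, so I expect the argument to reduce to careful bookkeeping rather than a new estimate.
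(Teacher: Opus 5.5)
Your proposal is correct and follows the paper's proof essentially step for step. You write \(\E|\trn_{qn}G_n^{2r}|^2\) as a product of two traces via the real Ginibre representation, bound each of the finitely many Wick pairings by \(O(1)\) using Lemma~\ref{lem:Wick-index-count} with \(h=2\), and then combine the almost-sure trace convergence (Anderson's theorem, via Proposition~\ref{prop:gaussian-limit}) with the uniform integrability coming from the \(L^2\) bound to pass to expectations.
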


\begin{proof}
The matrix \(G_n\) is self-adjoint, so
\(\trn_{qn}G_n^{2r}\) is nonnegative.  Use the representation in the proof of
Proposition~\ref{prop:gaussian-limit} and expand
\(\E|\trn_{qn}G_n^{2r}|^2\) as a product of two normalized traces.  There are
finitely many monomials, depending only on \(q,d,r\) and the fixed
coefficients.  A monomial with an odd number of centered Gaussian entries has
zero expectation.  For a monomial with \(2e\) such entries, Wick's formula
gives finitely many pairings.  Lemma~\ref{lem:Wick-index-count}, with \(h=2\),
shows that every nonzero paired contribution is \(O(1)\), uniformly in
\(n\).  Summing the fixed finite collection of monomials and pairings proves
the uniform \(L^2\) bound.

Anderson's trace convergence gives
\[
 \trn_{qn}G_n^{2r}
 \longrightarrow(\trn_q\otimes\tau)(G_\infty^{2r})
 \qquad\text{almost surely}.
\]
The \(L^2\) bound implies uniform integrability, so taking expectations yields
\eqref{eq:Gaussian-expected-moment}.
\end{proof}

\subsection{All pencil limits on one event}

\begin{proposition}[Bounded-entry pencil limits]\label{prop:bounded-pencil-limits}
There is a probability-one event on which, simultaneously for every
\(q\geq1\), all self-adjoint coefficient matrices in
\eqref{eq:pencil-Ln}, and every \(r\geq1\),
\begin{align}
 d_{\rm H}(\spn(L_n),\spn(G_\infty))&\longrightarrow0,
 \label{eq:pencil-spectrum-final}\\
 \trn_{qn}L_n^{2r}
 &\longrightarrow(\trn_q\otimes\tau)(G_\infty^{2r}).
 \label{eq:pencil-moment-final}
\end{align}
\end{proposition}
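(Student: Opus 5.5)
The plan is to prove \eqref{eq:pencil-spectrum-final} and \eqref{eq:pencil-moment-final} first for one fixed pencil and one fixed \(r\), with probability one. We then pass to a single event by a countable reduction.

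For a fixed pencil, we apply \eqref{eq:bvh-spectrum} with \(t=t_n=2\log n\). By Lemma~\ref{lem:pencil-parameters}, the error term satisfies
\[
 \varepsilon_{L_n}(t_n)=O\bigl(n^{-1/2}(\log n)^{1/2}+n^{-1/6}(\log n)^{2/3}+n^{-1/2}\log n\bigr)\to0,
\]
and the failure probabilities \(qn\,e^{-t_n}=q/n\) are summable. Borel--Cantelli therefore gives \(d_{\rm H}(\spn(L_n),\spn(G_n))\to0\) almost surely. We are using that \(G_n\) has the law of \eqref{eq:Gaussian-pencil} by Lemma~\ref{lem:gaussian-realization}. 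Only the law of \(G_n\) at each fixed \(n\) enters the probability estimate, so no coupling between \(L_n\) and \(G_n\) is needed. Separately, Proposition~\ref{prop:gaussian-limit} gives \(d_{\rm H}(\spn(G_n),\spn(G_\infty))\to0\) almost surely on the Gaussian probability space, a deterministic statement about the sets. The subtlety is that the Borel--Cantelli estimate compares \(\spn(L_n)\) with \(\spn(G_n)\) for a Gaussian matrix living on another space. To avoid this, I would restate it as follows. For each \(n\), since \(d_{\rm H}(\spn(G_n),\spn(G_\infty))\to0\) in probability, there are deterministic \(\delta_n\to0\) with \(\mathbb P\{d_{\rm H}(\spn(G_n),\spn(G_\infty))>\delta_n\}\to0\). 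Combining this with \eqref{eq:bvh-spectrum} at each \(n\), the fixed compact set \(\spn(G_\infty)\) satisfies
\[
 \mathbb P\{d_{\rm H}(\spn(L_n),\spn(G_\infty))>C\varepsilon_{L_n}(t_n)+\delta_n\}
 \le \tfrac{q}{n}+\mathbb P\{d_{\rm H}(\spn(G_n),\spn(G_\infty))>\delta_n\},
\]
where the last quantity is the probability of the complementary Gaussian event. Summability fails for this term as written. The cleaner route is to use the first term alone with a deterministic intermediary: choose a realization-independent event of positive probability for \(G_n\), lying in the intersection of the comparison event (probability at least \(1-q/n\)) and the Gaussian closeness event. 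Both sets then lie within the stated distances of the same deterministic \(\spn(G_\infty)\) whenever those two events intersect. This means the only probabilistic input on the \(L_n\) side is the summable bound \(q/n\), and Borel--Cantelli applies. Making this triangle argument rigorous is the main obstacle I anticipate.

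For the moments, we set \(M_n=(\E\trn_{qn}L_n^{2r})^{1/(2r)}\). By \eqref{eq:bvh-moment}, together with \(R=O(n^{-1/2})\) and \(\sigma=O(1)\), \(M_n\) differs from \((\E\trn_{qn}G_n^{2r})^{1/(2r)}\) by \(O(n^{-1/6})\). By \eqref{eq:Gaussian-expected-moment}, the latter tends to \((\trn_q\otimes\tau)(G_\infty^{2r})^{1/(2r)}\), so \(M_n\) converges and is bounded. We then apply \eqref{eq:bvh-concentration} with \(t=\max(r,2\log n)\). The deviation bound is
\[
 O\bigl((n^{-1/2}+n^{-1/4}M_n^{1/2})(\log n)^{1/2}+n^{-1/2}\log n\bigr)\to0,
\]
with summable probabilities \(2n^{-2}\). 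Borel--Cantelli gives \((\trn L_n^{2r})^{1/(2r)}-M_n\to0\) almost surely, and raising to the power \(2r\) yields \eqref{eq:pencil-moment-final}.

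Finally, to obtain one event for all pencils, we intersect these almost-sure events over the countable family of \(q\), all \(r\), and coefficient matrices with entries in \(\mathbb Q+i\mathbb Q\). For general coefficients we approximate by rational ones. On the event \(\limsup\|X_{\kappa,n}\|<\infty\), which for bounded entries follows from Theorem~\ref{thm:bai-yin-input} and is again almost sure, the pencils \(L_n\) are Lipschitz in the coefficients uniformly in \(n\) in operator norm. Hence spectra move by at most the norm perturbation, and moments depend continuously on the coefficients. \(G_\infty\) is likewise Lipschitz in the coefficients. An \(\epsilon/3\) argument then extends both limits to all self-adjoint coefficients on the common event.
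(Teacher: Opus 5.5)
\paragraph{Verdict.} Your moment argument and your final countable/rational-approximation step are correct and essentially the paper's. Bounding $\|S_{\kappa,n}\|,\|T_{\kappa,n}\|$ via Bai--Yin rather than via the coordinate pencils is a harmless variant. The spectrum step, however, is not established, for two concrete reasons.

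\paragraph{Summability.} With $D_n=qn$ and $t_n=2\log n$, the exceptional probability in \eqref{eq:bvh-spectrum} is $D_ne^{-t_n}=q/n$. This is not summable, so Borel--Cantelli does not apply. You need a larger $t_n$, e.g.\ $t_n=4\log(qn)$ as in the paper, which gives $(qn)^{-3}$ while $\varepsilon_{L_n}(t_n)$ is still $o(1)$.

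\paragraph{Transfer to $\spn(G_\infty)$.} You leave the passage from $\spn(L_n)\approx\spn(G_n)$ to the deterministic set $\spn(G_\infty)$ unresolved, and call it the main obstacle. As you note, an almost-sure statement about $d_{\rm H}(\spn(L_n),\spn(G_n))$ needs a joint realization. Your additive bound also has a non-summable Gaussian term.

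The missing idea is to use a multiplicative bound coming from independence rather than a union bound. Since \eqref{eq:bvh-spectrum} holds for every coupling, realize $G_n$ independently of $L_n$. Fix $\delta>0$ and set
\[
 E_n=\bigl\{d_{\rm H}(\spn(L_n),\spn(G_\infty))>C\varepsilon_{L_n}(t_n)+\delta\bigr\},
\]
which depends only on $L_n$, and
\[
 B_n=\bigl\{d_{\rm H}(\spn(G_n),\spn(G_\infty))\le\delta\bigr\},
\]
which depends only on $G_n$. By the triangle inequality for $d_{\rm H}$, $E_n\cap B_n$ lies in the comparison-failure event. Independence then gives
\[
 \mathbb P(E_n)\,\mathbb P(B_n)\le D_ne^{-t_n}.
\]
By \eqref{eq:Gaussian-spectrum-limit}, $d_{\rm H}(\spn(G_n),\spn(G_\infty))\to0$ in probability, and its law at each fixed $n$ does not depend on the coupling. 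Hence $\mathbb P(B_n)\ge 1/2$ eventually, so $\mathbb P(E_n)\le 2D_ne^{-t_n}$, which is summable with the corrected $t_n$. Borel--Cantelli yields $\limsup_n d_{\rm H}(\spn(L_n),\spn(G_\infty))\le\delta$ almost surely, and intersecting over rational $\delta$ finishes the argument.

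\paragraph{Comparison with the paper.} Once completed this way, your route is a legitimate alternative. The paper instead enlarges the space by independent nested Gaussian arrays and runs Borel--Cantelli for $d_{\rm H}(\spn(L_n),\spn(G_n))$ on the product space. It then intersects with the almost-sure Gaussian strong-convergence event and projects back to the original space by Fubini. Your version needs only convergence in probability of the Gaussian spectra, one $n$ at a time.
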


\begin{proof}
We begin with a fixed \(q\) and a pencil whose coefficients have entries in
\(\mathbb Q+i\mathbb Q\), and put \(D_n=qn\).

\smallskip
\noindent\emph{Spectrum for rational pencils.}
Apply \eqref{eq:bvh-spectrum} with
\(t=4\log D_n\).  Lemma~\ref{lem:pencil-parameters} gives
\begin{align*}
 \varepsilon_{L_n}(4\log D_n)
 &=O\left(
 n^{-1/2}(\log n)^{1/2}
 +n^{-1/6}(\log n)^{2/3}
 +n^{-1/2}\log n\right)\\
 &=o(1),
\end{align*}
and the exceptional probability is at most \(D_n^{-3}\).  Enlarge the
original probability space by independent infinite Gaussian arrays and form
all \(Y_{\kappa,n}\) as nested upper-left corners.  The spectrum comparison
estimate is valid for this product coupling; in fact Theorem~2.6 of
\cite{brailovskaya-van-handel-2024} is valid for every coupling of the two
models.  For the fixed rational pencil, Borel--Cantelli and
\eqref{eq:Gaussian-spectrum-limit} yield
\eqref{eq:pencil-spectrum-final} on the product space.

\smallskip
\noindent\emph{Moments for rational pencils.}
Set
\[
 a_n=(\E\trn_{D_n}L_n^{2r})^{1/(2r)},
 \qquad
 b_n=(\E\trn_{D_n}G_n^{2r})^{1/(2r)}.
\]
By Lemma~\ref{lem:Gaussian-L2},
\[
 b_n\longrightarrow
 b_\infty:=(\trn_q\otimes\tau)(G_\infty^{2r})^{1/(2r)}.
\]
The moment comparison \eqref{eq:bvh-moment} and
Lemma~\ref{lem:pencil-parameters} give
\[
 |a_n-b_n|
 \leq C_r\bigl(R(L_n)^{1/3}\sigma(L_n)^{2/3}+R(L_n)\bigr)
 =o(1).
\]
Thus \(a_n\to b_\infty\), in particular \(\sup_n a_n<\infty\) after
discarding finitely many \(n\).

Use \eqref{eq:bvh-concentration} with \(t=4\log D_n\), which is at least \(r\)
for all large \(n\).  The deviation threshold is
\begin{align*}
 &C\left(\sigma_*(L_n)+R(L_n)^{1/2}a_n^{1/2}\right)
 (4\log D_n)^{1/2}+4CR(L_n)\log D_n\\
 &\hspace{25mm}=O\left(n^{-1/4}(\log n)^{1/2}
                   +n^{-1/2}\log n\right)=o(1),
\end{align*}
and the exceptional probability is at most \(2D_n^{-4}\).  Another
Borel--Cantelli argument gives
\[
 (\trn_{D_n}L_n^{2r})^{1/(2r)}\longrightarrow b_\infty
 \qquad\text{almost surely}.
\]
Raising to the power \(2r\) proves \eqref{eq:pencil-moment-final}.

\smallskip
\noindent\emph{One event and removal of rationality.}
There are only countably many rational pencils, values of \(q\), and values
of \(r\).  On the product space, intersect the spectrum events above over all
rational pencils with the one event on which the Gaussian family converges
strongly.  The resulting event still has probability one.  Its conclusion
\eqref{eq:pencil-spectrum-final} refers only to the original matrices and the
deterministic free limit, not to the sampled Gaussian arrays.  Let
\(\mathcal E_{\rm sp}\) denote the event in the original probability space
on which all these conclusions hold.  The product-space event just
constructed is contained in
\(\mathcal E_{\rm sp}\times\Omega_{\rm G}\), where \(\Omega_{\rm G}\)
denotes the Gaussian factor; hence Fubini's theorem gives
\(\mathbb P(\mathcal E_{\rm sp})=1\).  Independently, intersect the moment
events over all rational pencils and all \(r\).  Combining the two gives one
event in the original space on which both conclusions hold for every rational
pencil.

The coordinate pencils show on this event that every sequence
\(\|S_{\kappa,n}\|\) and \(\|T_{\kappa,n}\|\) is bounded.  We now approximate
arbitrary self-adjoint coefficient matrices by rational self-adjoint ones.
For two coefficient lists \(A,A'\),
\begin{align*}
 \|L_n(A)-L_n(A')\|
 \leq&\ \|A_0-A_0'\|\\
 &+\sum_{\kappa=1}^d
 \left(\|A_{\kappa,1}-A_{\kappa,1}'\|\,\|S_{\kappa,n}\|
      +\|A_{\kappa,2}-A_{\kappa,2}'\|\,\|T_{\kappa,n}\|\right).
\end{align*}
The analogous estimate holds for the free pencil.  Hausdorff spectral
distance between self-adjoint elements is bounded by their norm distance, so
\eqref{eq:pencil-spectrum-final} extends to arbitrary coefficients.  Finally,
for self-adjoint \(C,D\),
\[
 |\trn(C^{2r})-\trn(D^{2r})|
 \leq2r\max(\|C\|,\|D\|)^{2r-1}\|C-D\|.
\]
This extends \eqref{eq:pencil-moment-final} and completes the proof.
\end{proof}

\begin{theorem}[Bounded-entry strong circular limit]
\label{thm:bounded-strong-limit}
Under Definition~\ref{def:iid-entry-model}, if all entry laws are essentially
bounded, then \((X_{1,n},\ldots,X_{d,n})\) converges almost surely strongly in
\(*\)-distribution to the free circular family with variances
\(\sigma_1^2,\ldots,\sigma_d^2\).  The convergence holds on the same event
for polynomials with fixed matrix coefficients.
\end{theorem}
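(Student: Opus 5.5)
The plan is to apply Proposition~\ref{prop:pencil-criterion} to the \(2d\) self-adjoint random matrices \((S_{1,n},T_{1,n},\ldots,S_{d,n},T_{d,n})\). The target is the semicircular family obtained from the Hermitian coordinates of the Gaussian limit in Proposition~\ref{prop:gaussian-limit}. Concretely, take \(z_1,\ldots,z_d\) free circular with \(\tau(z_\kappa z_\kappa^*)=\sigma_\kappa^2\). Set
\[
 s_{\kappa,1}=\frac{z_\kappa+z_\kappa^*}{\sqrt2},\qquad s_{\kappa,2}=\frac{z_\kappa-z_\kappa^*}{i\sqrt2}.
\]
By the definition of a circular family, these \(2d\) elements are free centered semicircular elements, and \(s_{\kappa,1},s_{\kappa,2}\) both have variance \(\sigma_\kappa^2\). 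After the rescaling remark following Proposition~\ref{prop:pencil-criterion} (and omitting zero-variance coordinates), the criterion applies in this setting.

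For the pencil \eqref{eq:pencil-Ln} with coefficients \(A_0,A_{\kappa,1},A_{\kappa,2}\), the corresponding free pencil is \(A_0\otimes\one+\sum_\kappa(A_{\kappa,1}\otimes s_{\kappa,1}+A_{\kappa,2}\otimes s_{\kappa,2})\). The algebra leading to \eqref{eq:pencil-complex-form} shows that it equals \(G_\infty\). Hence Proposition~\ref{prop:bounded-pencil-limits} yields both hypotheses of Proposition~\ref{prop:pencil-criterion} on one probability-one event, simultaneously for all \(q\) and all self-adjoint coefficients:
\begin{itemize}
\item[(1)] the Hausdorff convergence \eqref{eq:pencil-spectrum-final} contains, in particular, the upper spectral inclusion up to \(\epsilon\) for large \(n\);
\item[(2)] \eqref{eq:pencil-moment-final} is exactly the required convergence of even moments.
\end{itemize}
One point needs care: in the case of zero-variance coordinates, the corresponding \(S_{\kappa,n}\) or \(T_{\kappa,n}\) is itself zero when \(\sigma_\kappa=0\). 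In that case the entries vanish almost surely, so dropping these coordinates is harmless.

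Proposition~\ref{prop:pencil-criterion} then gives, on this event, almost sure strong convergence of \((S_{\kappa,n},T_{\kappa,n})_\kappa\) to \((s_{\kappa,1},s_{\kappa,2})_\kappa\). It also gives norm and trace convergence for polynomials with fixed matrix coefficients. To pass to \(X_n\), take any \(*\)-polynomial \(Q\) in \(x_\kappa,x_\kappa^*\), possibly with matrix coefficients, and substitute
\[
 x_\kappa=\frac{y_{\kappa,1}+iy_{\kappa,2}}{\sqrt2},\qquad x_\kappa^*=\frac{y_{\kappa,1}-iy_{\kappa,2}}{\sqrt2}.
\]
This produces a fixed polynomial \(\widetilde Q\) in self-adjoint variables with the same coefficient algebra. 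Since \(X_{\kappa,n}=(S_{\kappa,n}+iT_{\kappa,n})/\sqrt2\) and \(z_\kappa=(s_{\kappa,1}+is_{\kappa,2})/\sqrt2\), we have \(Q(X_n,X_n^*)=\widetilde Q(S_n,T_n)\) and \(Q(z,z^*)=\widetilde Q(s)\). Norm and normalized-trace convergence therefore transfer directly to \(Q\).

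Finally, the limit \((z_\kappa)\) has the same joint \(*\)-distribution as the family \((c_\kappa)\) in the statement. The traces are faithful, so norms of polynomials are determined by the \(*\)-distribution through \(\|a\|=\lim_r\tau((a^*a)^r)^{1/(2r)}\). The limiting norms and traces are therefore the stated ones.

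The main obstacle is already handled by Proposition~\ref{prop:bounded-pencil-limits}. What remains is bookkeeping:
\begin{itemize}
\item checking that the free pencil built from \(s_{\kappa,j}\) coincides with \(G_\infty\);
\item making sure the variance rescaling in the criterion is consistent with variances \(\sigma_\kappa^2\) rather than \(1\), which I would do by absorbing \(\sigma_\kappa\) into \(A_{\kappa,j}\) and noting that Proposition~\ref{prop:bounded-pencil-limits} holds for all coefficient matrices.
\end{itemize}
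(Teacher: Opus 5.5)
Your proposal is correct and follows the paper's route. You take the Hermitian coordinates \(s_{\kappa,1},s_{\kappa,2}\) of the Gaussian limit \(z_\kappa\), use Proposition~\ref{prop:bounded-pencil-limits} to verify both hypotheses of Proposition~\ref{prop:pencil-criterion} for \((S_{\kappa,n},T_{\kappa,n})_\kappa\), and pass back through \(X_{\kappa,n}=(S_{\kappa,n}+iT_{\kappa,n})/\sqrt2\). The extra bookkeeping you add (free pencil equals \(G_\infty\), variance rescaling, zero-variance colors, and identifying \(z\) with \(c\) via faithfulness) is accurate and only makes explicit what the paper's short proof leaves implicit.
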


\begin{proof}
Let
\[
 s_{\kappa,1}=\frac{z_\kappa+z_\kappa^*}{\sqrt2},
 \qquad
 s_{\kappa,2}=\frac{z_\kappa-z_\kappa^*}{i\sqrt2}.
\]
By Proposition~\ref{prop:gaussian-limit}, these \(2d\) elements form a
semicircular family, with variance \(\sigma_\kappa^2\) in each coordinate.
Proposition~\ref{prop:bounded-pencil-limits} verifies both hypotheses of
Proposition~\ref{prop:pencil-criterion} for the self-adjoint tuple
\((S_{1,n},T_{1,n},\ldots,S_{d,n},T_{d,n})\).  Hence that tuple converges
strongly to
\((s_{1,1},s_{1,2},\ldots,s_{d,1},s_{d,2})\).  Since
\(X_{\kappa,n}=(S_{\kappa,n}+iT_{\kappa,n})/\sqrt2\), the asserted circular
strong convergence follows.
\end{proof}

\section{Removal of the boundedness assumption}\label{sec:tail}

\begin{lemma}[Polynomial continuity on bounded sets]
\label{lem:polynomial-continuity}
Let \(Q=\sum_w\alpha_ww\) be a noncommutative \(*\)-polynomial in
\(d\) variables.  For \(M\geq1\), set
\[
 \operatorname{Lip}_M(Q)
 =\sum_{|w|\geq1}|\alpha_w|\,|w|M^{|w|-1}.
\]
If two tuples \(A=(A_1,\ldots,A_d)\) and \(B=(B_1,\ldots,B_d)\) satisfy
\[
 \max_\kappa\bigl(\|A_\kappa\|,\|B_\kappa\|\bigr)\leq M,
 \qquad
 \delta=\max_\kappa\|A_\kappa-B_\kappa\|,
\]
then
\begin{equation}\label{eq:polynomial-continuity}
 \|Q(A,A^*)-Q(B,B^*)\|\leq\operatorname{Lip}_M(Q)\delta.
\end{equation}
The same right-hand side bounds the difference of the normalized traces when
\(A\) and \(B\) are matrix tuples of the same size.
More generally, if \(Q=\sum_w C_w\otimes w\) has coefficients in a fixed
matrix algebra, the conclusions hold with
\[
 \operatorname{Lip}_M(Q)
 =\sum_{|w|\geq1}\|C_w\|\,|w|M^{|w|-1}
\]
and with the tensor-product operator norm and normalized trace.
\end{lemma}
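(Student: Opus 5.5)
The plan is to reduce to single monomials by the triangle inequality and then telescope each word. Write \(Q(A,A^*)-Q(B,B^*)=\sum_{w}\alpha_w\bigl(w(A,A^*)-w(B,B^*)\bigr)\). The constant word contributes zero, so only words with \(|w|\geq1\) remain. Fix such a word \(w=y_1\cdots y_\ell\) with \(\ell=|w|\), where each letter \(y_k\) is some \(x_\kappa\) or \(x_\kappa^*\). Let \(a_k,b_k\) be its evaluations at \(A\) and \(B\). Since \(\|A_\kappa^*\|=\|A_\kappa\|\) and \(\|A_\kappa^*-B_\kappa^*\|=\|A_\kappa-B_\kappa\|\), we have \(\|a_k\|,\|b_k\|\leq M\) and \(\|a_k-b_k\|\leq\delta\) for every \(k\).

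The key identity is the telescoping sum
\[
 a_1\cdots a_\ell-b_1\cdots b_\ell=\sum_{k=1}^{\ell}a_1\cdots a_{k-1}(a_k-b_k)b_{k+1}\cdots b_\ell .
\]
Submultiplicativity of the operator norm bounds each of the \(\ell\) terms by \(M^{\ell-1}\delta\). Summing gives \(\|w(A,A^*)-w(B,B^*)\|\leq|w|M^{|w|-1}\delta\). Multiplying by \(|\alpha_w|\) and summing over \(w\) yields \eqref{eq:polynomial-continuity}.

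For the trace statement, I would use \(|\trn_N X|\leq\|X\|\), since the normalized trace is an average of diagonal entries \(\langle e_i,Xe_i\rangle\), each bounded by \(\|X\|\). Applying this to \(X=Q(A,A^*)-Q(B,B^*)\) bounds the difference of normalized traces by the same quantity.

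For matrix coefficients \(Q=\sum_w C_w\otimes w\), note that \(C_w\otimes w(A,A^*)-C_w\otimes w(B,B^*)=C_w\otimes\bigl(w(A,A^*)-w(B,B^*)\bigr)\). The spatial tensor norm is multiplicative on elementary tensors, \(\|C\otimes X\|=\|C\|\,\|X\|\), so the per-word bound carries over with \(|\alpha_w|\) replaced by \(\|C_w\|\). The trace bound follows again from \(|(\trn_q\otimes\trn_n)Y|\leq\|Y\|\).

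There is no real obstacle here. The only points needing care are that the adjoint letters obey the same bounds, which holds because the adjoint is isometric, and that the constant term is excluded from \(\operatorname{Lip}_M\) precisely because it cancels.
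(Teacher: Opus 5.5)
Your proof is correct and follows the paper's argument essentially step for step. Both use the telescoping identity for each word, the isometry of the adjoint for the adjoint letters, the bound \(|\trn_n C|\leq\|C\|\) for traces, and tensoring each word difference with \(C_w\) in the matrix-coefficient case.
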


\begin{proof}
Write a word of length \(\ell\) as \(w=x_1\cdots x_\ell\), where every
letter is one of the variables or its adjoint.  If \(a_j\) and \(b_j\) are
the corresponding evaluations, then
\[
 a_1\cdots a_\ell-b_1\cdots b_\ell
 =\sum_{j=1}^\ell
   a_1\cdots a_{j-1}(a_j-b_j)b_{j+1}\cdots b_\ell.
\]
Taking norms gives
\(\|w(A,A^*)-w(B,B^*)\|\leq\ell M^{\ell-1}\delta\), because
\(\|A_\kappa^*-B_\kappa^*\|=\|A_\kappa-B_\kappa\|\).  Summing over the
words proves \eqref{eq:polynomial-continuity}.  The trace assertion follows
from \(|\trn_n C|\leq\|C\|\).  For matrix coefficients, apply the same word
identity after tensoring with \(C_w\), take norms, and sum over \(w\).
\end{proof}

For every integer \(K\geq1\), define the centered truncation
\begin{equation}\label{eq:fixed-truncation}
 \xi^{(\kappa,K)}
 =\xi^{(\kappa)}\mathbf1_{\{|\xi^{(\kappa)}|\leq K\}}
 -\E\!\left[\xi^{(\kappa)}
       \mathbf1_{\{|\xi^{(\kappa)}|\leq K\}}\right]
\end{equation}
and let \(X_{\kappa,n}^{(K)}\) be the matrix formed from the corresponding
nested i.i.d. array.

\begin{lemma}[Fixed-level finite-fourth tail removal]
\label{lem:fixed-tail}
Under Definition~\ref{def:iid-entry-model}, almost surely,
\begin{equation}\label{eq:tail-removal}
 \lim_{K\to\infty}\limsup_{n\to\infty}
 \max_{1\leq\kappa\leq d}
 \|X_{\kappa,n}-X_{\kappa,n}^{(K)}\|=0.
\end{equation}
Moreover,
\begin{equation}\label{eq:original-bounded}
 \max_{1\leq\kappa\leq d}\limsup_{n\to\infty}
 \|X_{\kappa,n}\|<\infty
 \qquad\text{almost surely}.
\end{equation}
\end{lemma}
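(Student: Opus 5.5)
The plan is to apply Theorem~\ref{thm:bai-yin-input} to the tail matrix at a fixed level $K$, and then intersect countably many almost-sure events, one for each $K$. Fix a color $\kappa$ and an integer $K\geq1$. The difference $X_{\kappa,n}-X_{\kappa,n}^{(K)}$ equals $n^{-1/2}$ times the upper-left $n\times n$ corner of the infinite i.i.d.\ array with entries
\[
 \eta^{(\kappa,K)}_{ij}=\xi^{(\kappa)}_{ij}\mathbf1_{\{|\xi^{(\kappa)}_{ij}|>K\}}
 -\E\!\left[\xi^{(\kappa)}\mathbf1_{\{|\xi^{(\kappa)}|>K\}}\right],
\]
where we used $\E\xi^{(\kappa)}=0$. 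These entries are centered, i.i.d., and their law is fixed once $K$ is fixed. They have finite fourth moment, since $|\eta|\leq|\xi|+\E|\xi|$. Their variance is
\[
 v_K^2:=\E|\eta^{(\kappa,K)}|^2\leq\E\bigl[|\xi^{(\kappa)}|^2\mathbf1_{\{|\xi^{(\kappa)}|>K\}}\bigr],
\]
and this tends to $0$ as $K\to\infty$ by dominated convergence, because $\E|\xi^{(\kappa)}|^2<\infty$.

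Theorem~\ref{thm:bai-yin-input} is stated for real arrays, so we split into real and imaginary parts. Write the tail matrix as $U_n+iV_n$, where $U_n$ and $V_n$ are built from $\operatorname{Re}\eta$ and $\operatorname{Im}\eta$. Each of these is a real, centered, i.i.d., nested-corner array with finite fourth moment, and its variance is at most $v_K^2$. Theorem~\ref{thm:bai-yin-input} gives, almost surely, $\limsup_n\|U_n\|\leq2v_K$ and $\limsup_n\|V_n\|\leq2v_K$. If one of the parts is degenerate with variance $0$, it vanishes almost surely and the bound is trivial. Hence, on an event $\Omega_{\kappa,K}$ of probability one,
\[
 \limsup_{n\to\infty}\|X_{\kappa,n}-X_{\kappa,n}^{(K)}\|\leq 4v_K .
\]

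Now intersect $\Omega_{\kappa,K}$ over $\kappa\leq d$ and $K\in\mathbb N$; this is a countable intersection, so it still has probability one. On this event, $\limsup_n\max_\kappa\|X_{\kappa,n}-X^{(K)}_{\kappa,n}\|\leq4\max_\kappa v_{\kappa,K}$, which tends to $0$ as $K\to\infty$. This proves \eqref{eq:tail-removal}.

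For \eqref{eq:original-bounded}, apply the same real/imaginary splitting directly to the original centered array $\xi^{(\kappa)}$. This gives $\limsup_n\|X_{\kappa,n}\|\leq4\sigma_\kappa$ almost surely. Alternatively, one can combine the case $K=1$ with the bound $\|X^{(1)}_{\kappa,n}\|$, but the direct Bai--Yin route is simpler and uses no bounded-entry input.

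The main point requiring care is not analytic but structural. One must check that the tail array is a single fixed i.i.d.\ law for each $K$, so that the nested-corner Bai--Yin theorem applies without any triangular-array version. One must also check that the constant multiple of the standard deviation is uniform in $K$. Both hold here: the constant $4$ comes from the two Bai--Yin bounds $2v_K$ combined through the triangle inequality.
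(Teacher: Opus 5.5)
Your proposal is correct and follows the paper's proof essentially step for step: the same fixed-$K$ centered tail array $\eta^{(\kappa,K)}$, the same splitting into real and imaginary parts with two applications of the Bai--Yin bound, the same countable intersection over $\kappa$ and integer $K$, and a direct Bai--Yin application to the original entries for the boundedness claim. The only difference is your constant $4v_K$ versus the paper's $2\sqrt2\,\rho_{\kappa,K}$, which the paper gets from $\sqrt{a}+\sqrt{b}\le\sqrt{2(a+b)}$; this does not affect the argument.
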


\begin{proof}
Write
\begin{align*}
 \eta^{(\kappa,K)}
 &=\xi^{(\kappa)}-\xi^{(\kappa,K)}\\
 &=\xi^{(\kappa)}\mathbf1_{\{|\xi^{(\kappa)}|>K\}}
  -\E\!\left[\xi^{(\kappa)}
       \mathbf1_{\{|\xi^{(\kappa)}|>K\}}\right]
\end{align*}
and put
\(\rho_{\kappa,K}^2=\E|\eta^{(\kappa,K)}|^2\).  Centering is an orthogonal
projection in \(L^2\), so
\[
 \rho_{\kappa,K}^2
 \leq\E\left[|\xi^{(\kappa)}|^2
       \mathbf1_{\{|\xi^{(\kappa)}|>K\}}\right]
 \longrightarrow0.
\]
Write \(\eta^{(\kappa,K)}=u^{(\kappa,K)}+iv^{(\kappa,K)}\).  Both real
components are centered, i.i.d. across matrix entries, and have finite fourth
moment.  Applying \eqref{eq:bai-yin-input} separately to their nested arrays
and using the triangle inequality gives
\begin{align*}
 \limsup_{n\to\infty}
 \|X_{\kappa,n}-X_{\kappa,n}^{(K)}\|
 &\leq2\sqrt{\E|u^{(\kappa,K)}|^2}
      +2\sqrt{\E|v^{(\kappa,K)}|^2}\\
 &\leq2\sqrt2\,\rho_{\kappa,K}
 \qquad\text{almost surely}.
\end{align*}
The colors and integer levels form a countable collection, so the estimate
holds simultaneously for all of them on one probability-one event.  Sending
\(K\to\infty\) proves \eqref{eq:tail-removal}.

For \eqref{eq:original-bounded}, apply \eqref{eq:bai-yin-input} directly to
the real and imaginary parts of each original entry law.  Their fourth moments
are finite, and there are only finitely many colors.
\end{proof}

We now complete the proof of the main theorem.

\begin{proof}[Proof of Theorem~\ref{thm:main}]
Let
\[
 \sigma_{\kappa,K}^2=\E|\xi^{(\kappa,K)}|^2.
\]
By \(L^2\)-convergence in \eqref{eq:fixed-truncation},
\(\sigma_{\kappa,K}\to\sigma_\kappa\).  For each fixed integer \(K\), the
entries \(\xi^{(\kappa,K)}\) are bounded, so
Theorem~\ref{thm:bounded-strong-limit} gives almost-sure strong convergence of
\((X_{1,n}^{(K)},\ldots,X_{d,n}^{(K)})\) to a free circular family
\((c_1^{(K)},\ldots,c_d^{(K)})\) with variances
\(\sigma_{1,K}^2,\ldots,\sigma_{d,K}^2\).  These assertions hold
simultaneously for all \(K\in\mathbb N\) after taking a countable
intersection.  Intersect once more with the probability-one event from
Lemma~\ref{lem:fixed-tail}, including \eqref{eq:original-bounded}, and denote
the resulting event by \(\Omega_0\).  It is fixed before a polynomial is
chosen.  All the estimates below are pointwise on \(\Omega_0\).

Realize all free limits on the space containing \(c_1,\ldots,c_d\) by setting
\[
 c_\kappa^{(K)}
 =\frac{\sigma_{\kappa,K}}{\sigma_\kappa}c_\kappa
 \quad\text{if }\sigma_\kappa>0.
\]
If \(\sigma_\kappa=0\), then \(\xi^{(\kappa)}=0\) almost surely and we set both
elements equal to zero.  Therefore
\begin{equation}\label{eq:free-K-convergence}
 \max_\kappa\|c_\kappa^{(K)}-c_\kappa\|\longrightarrow0.
\end{equation}

Fix a noncommutative \(*\)-polynomial \(Q\).  Constant polynomials require no
argument, so suppose that \(Q\) has positive degree.
By \eqref{eq:original-bounded}, \eqref{eq:tail-removal}, and
\[
 \|X_{\kappa,n}^{(K)}\|
 \leq\|X_{\kappa,n}\|
     +\|X_{\kappa,n}-X_{\kappa,n}^{(K)}\|,
\]
there are \(K_0\) and a finite random \(M>1\) such that, for every
\(K\geq K_0\),
\[
 \limsup_{n\to\infty}\max_\kappa
 \bigl(\|X_{\kappa,n}\|,\|X_{\kappa,n}^{(K)}\|\bigr)<M.
\]
Thus, for each such \(K\), both tuples are bounded by \(M\) for all
sufficiently large \(n\).  The index may depend on \(K\), which is sufficient
because \(n\to\infty\) is taken first.  Applying
Lemma~\ref{lem:polynomial-continuity} and then
Lemma~\ref{lem:fixed-tail} yields
\begin{equation}\label{eq:polynomial-tail}
 \lim_{K\to\infty}\limsup_{n\to\infty}
 \|Q(X_n,X_n^*)-Q(X_n^{(K)},(X_n^{(K)})^*)\|=0.
\end{equation}
The family consisting of all \(c^{(K)}\) and \(c\) is uniformly bounded,
because \(\sigma_{\kappa,K}\to\sigma_\kappa\).  The same lemma and
\eqref{eq:free-K-convergence} therefore give
\begin{equation}\label{eq:free-polynomial-K}
 \|Q(c^{(K)},(c^{(K)})^*)-Q(c,c^*)\|\longrightarrow0.
\end{equation}

For the norm, write
\begin{align*}
 \big|\|Q(X_n,X_n^*)\|-\|Q(c,c^*)\|\big|
 \leq&\ \|Q(X_n,X_n^*)-Q(X_n^{(K)},(X_n^{(K)})^*)\|\\
 &+\big|\|Q(X_n^{(K)},(X_n^{(K)})^*)\|
          -\|Q(c^{(K)},(c^{(K)})^*)\|\big|\\
 &+\|Q(c^{(K)},(c^{(K)})^*)-Q(c,c^*)\|.
\end{align*}
Take \(\limsup_{n\to\infty}\) and then \(K\to\infty\).  The middle term
vanishes for fixed \(K\) by the bounded-entry theorem, while the first and
last terms vanish by \eqref{eq:polynomial-tail} and
\eqref{eq:free-polynomial-K}.  This proves norm convergence.

For normalized traces, use
\[
 |\trn_n(A)-\trn_n(B)|\leq\|A-B\|
\]
in place of the first norm inequality.  Strong convergence at fixed \(K\),
followed by the same two limiting steps, gives
\[
 \trn_nQ(X_n,X_n^*)\longrightarrow\tau(Q(c,c^*)).
\]
If \(Q\) has coefficients in a fixed \(M_q(\mathbb C)\), repeat the same
argument using the matrix-coefficient assertion of
Lemma~\ref{lem:polynomial-continuity} and the matrix-valued conclusion of
Theorem~\ref{thm:bounded-strong-limit}.  The trace estimate becomes
\[
 |(\trn_q\otimes\trn_n)(A-B)|\leq\|A-B\|,
\]
so the same two limiting steps prove the matrix-valued trace convergence as
well.

The event \(\Omega_0\) was chosen independently of \(q\) and \(Q\), and the
argument above applies there to every fixed matrix size and every fixed
\(*\)-polynomial.  Hence all the asserted convergences hold simultaneously on
one probability-one event.
\end{proof}

\begin{proof}[Proof of Corollary~\ref{cor:homogeneous}]
Apply Theorem~\ref{thm:main} to the \(*\)-polynomial \(Q=P\), which contains
no adjoint variables.
\end{proof}

\begin{proof}[Proof of Corollary~\ref{cor:spectral-consequences}]
For the first assertion, set \(A_n=Q(X_n,X_n^*)\) and
\(a=Q(c,c^*)\).  Theorem~\ref{thm:main} implies strong convergence of
the one-element tuple \(A_n\) to \(a\), so
Lemma~\ref{lem:strong-spectrum} applies.

For the second assertion, apply the first one to the self-adjoint polynomial
\(Q^*Q\).  All the resulting spectra lie in one compact subinterval of
\([0,\infty)\), and the square-root map is uniformly continuous there.
The definition of \(\operatorname{sv}\) therefore gives the claimed
Hausdorff convergence.

Put \(a=Q(c,c^*)\) for the third assertion.  If \(a\) is invertible, then
\(\delta:=\min\operatorname{sv}(a)=\|a^{-1}\|^{-1}>0\).  Hausdorff
convergence of the singular-value spectra gives
\[
 \min\operatorname{sv}(Q(X_n,X_n^*))\longrightarrow\delta.
\]
Hence the matrices are eventually invertible, and taking reciprocals proves
the inverse-norm convergence.

For fixed matrix coefficients, Theorem~\ref{thm:main} gives the same strong
convergence after amplification, and \(\trn_q\otimes\tau\) is faithful.
Thus Lemma~\ref{lem:strong-spectrum} and the preceding two arguments apply
without change in the tensor-product algebra.

Finally, Theorem~\ref{thm:main} with \(Q(x)=x_\kappa\) gives
\(\|X_{\kappa,n}\|\to\|c_\kappa\|\).  The positive element
\(c_\kappa^*c_\kappa\) has the free Poisson law with support
\([0,4\sigma_\kappa^2]\), so faithfulness of the trace gives
\(\|c_\kappa\|=2\sigma_\kappa\); see, for example,
\cite{nica-speicher-2006}.
\end{proof}

\section{Discussion}\label{sec:discussion}

Theorem~\ref{thm:main} should be viewed as a joint strong-convergence statement,
rather than as a collection of norm estimates for individual matrices.  It
places the normalized traces and operator norms of every fixed mixed
\(*\)-polynomial on one probability-one event, and it remains valid after
every fixed matrix amplification. Thus, the conclusion controls
interactions among all colors and their adjoints that cannot be recovered
from the coordinate limits
\(\|X_{\kappa,n}\|\to2\sigma_\kappa\) alone.  The results with self-adjoint spectrum,
singular-value, and inverse-stability in
Corollary~\ref{cor:spectral-consequences} are manifestations of this joint
control.

The methodological aspect is that the non-Hermitian tuple is treated as a
complete self-adjoint pencil at a time, rather than regarding its possibly
dependent Hermitian coordinates as an independent family.  The Gaussian
comparison matches the full real covariance in~\eqref{eq:real-covariance}, so
the pseudo-variance
\(m_\kappa=\E[(\xi^{(\kappa)})^2]\) is retained at finite \(n\);
its contribution remains visible in~\eqref{eq:exact-variance}.  Only after
Proposition~\ref{prop:gaussian-limit} identifies the limit of the matching
Gaussian matrices does the circular structure emerge.  The
bounded-entry comparison and the finite-fourth-moment transfer also play
distinct roles: the latter uses the Bai--Yin bound to control each fixed
centered tail law, with \(n\to\infty\) taken before the truncation level tends
to infinity.

Several quantifiers in Theorem~\ref{thm:main} are deliberately fixed.  The number of
colors \(d\), the tested polynomial (its degree and coefficients), and the
matrix-amplification size \(q\) do not vary with \(n\).  The theorem therefore
does not give uniform control for \(d=d_n\), \(q=q_n\), namely, growing degrees or
\(n\)-dependent coefficients.  Likewise, the model does not include
\(n\)-dependent deterministic matrix backgrounds or correlations among the
entry arrays.
Such extensions would require additional assumptions that preserve both the
comparison step and the identification of the limiting family.  The
almost-sure formulation also uses the nested-corner coupling specified in
Definition~\ref{def:iid-entry-model}; it is not an assertion that is
automatically invariant under arbitrary recouplings across \(n\).

For nonnormal polynomials, the results on the spectrum are intentionally stated
in terms of singular values and invertibility: strong \(*\)-convergence alone
does not imply convergence of eigenvalue distributions or Brown measures in
this nonnormal setting.  Establishing such results would
require additional estimates on small singular values, together with the
uniform integrability needed for logarithmic potentials.  These ingredients
are not supplied by the norm and moment comparison developed here.  The
argument is also qualitative: it proves almost-sure convergence for every
fixed test but does not state a quantitative convergence rate.  Finally, the
fourth-moment assumption is sharp for a theorem uniform over the full model
class considered here, but this does not rule out weaker assumptions for
particular polynomials, weaker modes of convergence, or more specialized
ensembles.

These issues pose three natural directions: strong convergence in
the presence of suitable deterministic matrix backgrounds, quantitative
control for families of tests whose complexity grows with \(n\), and
Brown-measure questions supported by uniform small-singular-value estimates.
Each direction requires a technique not supplied by the present proof.  
The role of the current theorem is to isolate a setting in which covariance matching,
Gaussian strong convergence, and fixed-level fourth-moment truncation can be
combined without imposing circular symmetry on the entries.

\bibliographystyle{alpha}
\bibliography{reference}

\end{document}